\definecolor{cornellred}{rgb}{0.7, 0.11, 0.11}
\definecolor{dgreen}{rgb}{0.0, 0.5, 0.0}
\definecolor{ballblue}{rgb}{0.13, 0.67, 0.8}
\definecolor{royalblue(web)}{rgb}{0.25, 0.41, 0.88}
\definecolor{bleudefrance}{rgb}{0.19, 0.55, 0.91}
\definecolor{royalazure}{rgb}{0.0, 0.22, 0.66}
\newcommand\remove[1]{}
\theoremstyle{plain}
\newtheorem{lemma}{Lemma}[section]
\newtheorem*{lemma*}{Lemma}
\newtheorem*{corollary*}{Corollary}
\newtheorem{claim}[lemma]{Claim}
\theoremstyle{definition}
\newtheorem{theorem}[lemma]{Theorem}
\newtheorem*{theorem*}{Theorem}
\newtheorem*{problem*}{Problem}
\newtheorem{definition}[lemma]{Definition}
\newtheorem{example}[lemma]{Example}
\newcommand\comment[1]{}
\newcommand\abs[1]{\vert{#1}\vert}
\newcommand\norm[1]{\Vert{#1}\Vert}
\newcommand\R{\mathbb{R}}
\newcommand\B{\textsf{B}}
\newcommand\F{\mathcal{F}}
\newcommand\Z{\mathbb{Z}}
\newcommand\T{\mathcal{T}}
\newcommand\E{\mathbb{E}}
\renewcommand\P{\mathbb{P}}
\newcommand\PP{\mathcal{P}}
\newcommand\eps{\varepsilon}
\renewcommand{\bf}{\textbf}
\newif\ifrandom
\newcommand{\one}{{\bf 1}}
\title{\bf{Multiparameter Bernoulli Factories}}
\author{Renato Paes Leme \\ Google Research \and Jon Schneider \\ Google Research}
\begin{document}
\maketitle

\begin{abstract}
We consider the problem of computing with many coins of unknown bias. We are given samples access to $n$ coins with \emph{unknown} biases $p_1,\hdots, p_n$ and are asked to sample from a coin with bias $f(p_1, \hdots, p_n)$ for a given function $f:[0,1]^n \rightarrow [0,1]$. We give a complete characterization of the functions $f$ for which this is possible. As a consequence, we show how to extend various combinatorial sampling procedures (most notably, the classic Sampford Sampling for $k$-subsets) to the boundary of the hypercube.

\end{abstract}

\section{Introduction}

The Bernoulli factory problem was formally introduced by  \cite{keane1994bernoulli}, inspired by earlier work by  \cite{von195113} and \cite{asmussen1992stationarity}. While their initial goal was to design methods to exactly simulate certain stochastic processes, this tool later found applications in many different fields such as mechanism design~(\citealt{dughmi2017bernoulli};~\citealt{cai2019efficient}), quantum physics~\citep{dale2015provable,yuan2016experimental}, Markov chain Monte Carlo (MCMC) methods~\citep{flegal2012exact}, and exact Bayesian inference~\citep{gonccalves2017exact,herbei2014estimating}.

The original problem can be best described as manufacturing new (random) coins from old ones. One is given a Bernoulli variable of \emph{unknown} bias $p$, or for short, a $p$-coin. Even though we do not know the bias, we can flip the coin as many times as we need obtaining i.i.d. samples from it. The goal is to produce an $f(p)$-coin for a given function $f:[0,1] \rightarrow [0,1]$.

To give an example, consider $f(p) = p^2 - p^3$. A simple way to sample from an $f(p)$-coin is to flip the $p$-coin three times obtaining samples $X_1, X_2, X_3 \in \{0,1\}$. Now, we output $1$ if $X_1=X_2=1$ and $X_3 =0$. The probability of outputting $1$ is $p^2(1-p) = f(p)$.

Keane and O'Brien gave necessary and sufficient conditions for a function $f:[0,1]\rightarrow [0,1]$ to be implementable. The first condition is that the function $f$ must be continuous. The second condition says that either $f$ is the constant function $f(x) = 0$, the constant function $f(x) = 1$, or there exists some integer $m > 0$ such that for all $p \in [0, 1]$:
\begin{equation}\label{eq:1d_cond}
\min(p,1-p)^m \leq f(p) \leq 1-\min(p,1-p)^m
\end{equation}
Furthermore, their proof is algorithmic: given a function satisfying the conditions above, they give a procedure for sampling from $f(p)$.

\subsection{Why exact sampling?} An important aspect of Bernoulli factories is that they ask for \emph{exact} sampling. The original motivation for the Bernoulli factory problem was to perform exact simulations of stochastic processes. In these simulations, small sampling errors quickly compound, sometimes exponentially -- hence the need for exact sampling.  A similar situation arises in Bayesian inference, where sampling is a sub-routine in an iterative procedure. 

Finally, in Mechanism Design the fact that sampling is exact allows us to design black-box-reductions that are Bayesian-incentive compatible. Before the introduction of this machinery, the known reduction in the general case was $\epsilon$-Bayesian-incentive-compatible, i.e. agents had still a small incentive to deviate from truth-telling, which results in a much weaker game-theoretical guarantee.

This discussion is to motivate why in certain situations \emph{approximately} simulating an $f(p)$-coin is not enough. Approximately sampling can be easily done by the following method: let $X_1, \hdots, X_t$ be $t$ draws from the $p$-coin and define its empirical average as
$\bar{X}_t = (X_1 + \hdots + X_t)/t$.
We know by the Chernoff bound that for $n = \Omega(\epsilon^{-2} \log(1/\delta))$ we have $\P[\abs{p - \bar{X}_t} > \epsilon] < \delta$. Hence if $f$ is continuous, estimating $p$ by $\bar{X}_t$ and using external randomness to sample from a $f(\bar{X}_t)$-coin produces a reasonable approximation of the $f(p)$-coin.

\subsection{Multiparameter Factories}
In this paper we study the multiparameter version of this problem: given a compact set $K \subseteq [0,1]^n$ and $n$ coins with \emph{unknown} biases $(p_1, \hdots, p_n)\in K$, how to sample from a $f(p_1,\hdots,p_n)$-coin for a multivariate function $f:K \rightarrow [0,1]$. Previous approaches to this problem either are restricted to rational functions (\cite{mossel2005new},  \cite{morina2019bernoulli} and \cite{NiazadehLS21}) or assume that the vector of coins $(p_1,\hdots,p_n)$ 
lies away from the boundary of the hypercube (\cite{nacu2005fast} and \cite{morina2021extending}).

Here we investigate how to design factories that terminate almost surely \emph{everywhere} on the domain $K$ and establish necessary and sufficient conditions for implementability.  While for the interior of the hypercube a single inequality suffices to check for implementability (\cite{morina2021extending}), when we require termination everywhere the conditions become combinatorial: one imposed by each open face of the hypercube. If the function is non-zero at an open face, it must be lower bounded by a polynomial associated with that face. Similarly, if the function is non-one at a face, it must be upper bounded by a polynomial associated with that face. See Definition \ref{def:poly_bounded} for a precise statement.

We show that these conditions also turn out to be sufficient (Theorem \ref{thm:main_thm}) and exhibit an algorithm to sample from it. The difficulty of designing such algorithm is to make sure it works near the faces of the hypercube and dealing with the interaction of multiple combinatorial constraints when the faces meet. The heart of the proof is a new concentration argument in Section \ref{subsec:fq_zero}. We study a random vector $\bar X_t$ where each coordinate is an average of Bernoulli variables drawn from the coins of unknown bias $p = (p_1, \hdots, p_n)$. We relate the probability of a large deviation in a subset $T$ of the coordinates to the combinatorial structure of the hypercube, in particular to the polynomials associated with the faces where coordinates in $T$ are free. By doing so, we can bound the probability that $f(\bar X_t) \geq 1/2$ in terms of $f(p)$ in a way which holds \textit{uniformly} everywhere within the hypercube (even on the boundary).

\subsection{The case of Sampford Sampling} A particularly curious Bernoulli factory is the procedure due to \cite{sampford1967sampling}. Sampford Sampling actually predates the notion of a Bernoulli factory by 25 years and is commonly used throughout the statistics literature for sampling $k$-subsets with ``unequal probabilities of selection''. Formally, the problem is the following: given probabilities $(p_1, \hdots, p_n)$ such that $\sum_i p_i = k$, sample a subset $S$ of size $k$ such that $\P[i \in S] = p_i$. Sampford's solution just requires sample access to the coins.


A natural but incorrect solution is the following: sample each coin $i$ once and let $X_i \in \{0,1\}$ be the outcome. Output $S = \{i \in [n]; X_i = 1\}$ if $\abs{S}=k$. If not, retry. To see that this does not work, execute this procedure with coins with biases $(1-\epsilon, 1-\epsilon, 2\epsilon)$ and observe that the last element is chosen with $O(\epsilon^2)$ probability. There is a simple (but ingenious) fix to this algorithm: first we obtain $S$ as before (retrying until $\abs{S} = k$). We then choose a coin in $[n]\setminus S$ uniformly at random and flip it again. If this coin comes up $1$, we output $S$. If not, we resample $S$ and try again.

For the previous procedure to terminate, we need at least one coin with $0 < p_i < 1$, since one of the coins that came up $0$ initially must be re-flipped and needs to come up $1$. If all the coins are deterministic, e.g. $p=(1,1,0)$, the above procedure never terminates.

A natural open question is whether there exists an alternative Bernoulli factory for this problem that terminates for every input in $p \in K := \{p \in [0,1]^n; \sum_i p_i=k\}$. \cite{NiazadehLS21} shows the following negative result: there is no \emph{exponentially-converging} factory for $k$-subset that terminates for all $p \in K$. A factory is said to be exponentially-converging if for every $p$, there is a rate $r(p)$ such that the probability that the procedure has not terminated after flipping $t$ coins is at most $r(p)^t$. Note that Sampford sampling is exponentially converging for every $p \in K \cap (0,1)^n$.

The negative result by \cite{NiazadehLS21} excludes techniques based on Bernstein-rational functions, which are the only known ideas for designing factories that terminate a.s. at the boundary but all lead to exponentially-converging factories. 

Despite this negative evidence, our new algorithm produces a factory for $k$-subset that terminates a.s. everywhere on $K$. More generally, it solves a wider class of problems introduced in \cite{NiazadehLS21}: given a polytope $\PP \subseteq [0,1]^n$ and $n$ coins with biases $p = (p_1,\hdots, p_n) \in \PP$, sample a random vertex $v$ of $\PP$ such that $\E[v] = p$. \cite{NiazadehLS21} show that is is possible to construct factories for $\PP \cap (0,1)^n$ only when $\PP$ is the intersection of the hypercube $[0,1]^n$ and an affine subspace.

These factories, however, suffer from the same problem as Sampford sampling: they diverge for certain points in the boundary of the hypercube. Using the techniques developed in this paper, we exhibit alternative factories that terminate a.s. for all points in $\PP$. For example, consider the matching polytope: we are given coins $p_{ij}$ forming a doubly stochastic matrix and asked to sample a matching $M$ in the complete bipartite graph such that $\P[(i,j) \in M] = p_{ij}$. The previous factory required $p_{ij}>0$ for all edges $(i,j)$. The alternative factories constructed in this paper no longer have this restriction.

\subsection{Previous results on Multiparameter Factories}
\cite{mossel2005new} and  \cite{morina2019bernoulli} show how to design factories for Bernstein-rational functions, i.e., rational functions of the type $f(p) = a(p) / [a(p)+b(p)]$ where $a(p)$ and $b(p)$ are of the form $\sum_i c_i \prod_{j \in [n]} p_i^{a_{ij}} (1-p_i)^{b_{ij}}$ with $c_i > 0$. Recently, \cite{NiazadehLS21} showed how to design factories for certain combinatorial objects (such as matchings and flows) using Bernstein-rational functions. 

Beyond rational functions, \cite{nacu2005fast} give a procedure for sampling from any continuous function $f:[\epsilon, 1-\epsilon]^n \rightarrow (0,1)$. Their procedure is based on Bernstein's proof the Weierstrass approximation theorem. Their result is originally written for $n=1$ but there is nothing particular about one dimension in their construction.

Using a very clever idea, \cite{morina2021extending} shows in Chapter 3 of his PhD thesis how to combine factories defined in $[\epsilon,1-\epsilon]^n$ for decreasing values of $\epsilon$ into a single factory defined on the interior of the hypercube $(0,1)^n$. The condition required for the factories to be combined is a generalization of the condition of Keane and O'Brien. The result is stated for the open simplex $\Delta_n^0 = \{p \in (0,1)^n; \sum_{i=1}^n p_i = 1\}$. It shows that a function $f:\Delta_n^0 \rightarrow (0,1)$ is implementable by a Bernoulli factory if and only if it is continuous and for some integer $m$ it holds that:
$$\textstyle \left(\prod_i p_i\right)^m < f(p) < 1- \left(\prod_i p_i\right)^m, \forall p \in \Delta_n^0$$

The problem with Morina's factory is that it diverges by construction at the boundary and its running time blows up when we get close to it. The first step in its construction (Lemma 3.13 in  \cite{morina2021extending}) is to keep sampling all of the coins until each coin comes up $1$ at least $\Omega(mn)$ times and $0$ at least $\Omega(mn)$ times. At a high level, it uses the coin outcomes to pick a value of $\epsilon$ and then it uses a factory for $[\epsilon,1-\epsilon]^n$ domain. The process never terminates if the input has coins with $p_i \in \{0,1\}$. 

\section{Preliminaries}

\subsection{Multiparameter factory}
We start by giving a formal definition of a multiparameter factory (following \cite{NiazadehLS21}):

\begin{definition}\label{def:factory}
A Bernoulli factory $\F$ with input $(p_1,\hdots, p_n)$ corresponds to a (possibly infinite) rooted binary tree $\T$ where each node of $\T$ has either $2$ children (an internal node) or $0$ (a leaf). Each internal node is labelled either with a variable $p_i$ or with a constant $c \in (0,1)$. Each leaf is labelled with $0$ or $1$, representing the output of the factory upon reaching that node.

To execute the factory with coins $(p_1, \hdots, p_n)$ we start from the root and at each node we flip the coin given the label of that node (either one of the $p_i$-coins of unknown bias or a $c$-coin of known bias). Based on the outcome, we either take the left edge ($0$) or the right edge ($1$). If we reach a leaf, we output its label.
\end{definition}

A factory $\F$ is valid if for any input $p = (p_1, \hdots, p_n) \in [0,1]^n$ it reaches a leaf almost surely. Therefore we can view a factory as a random variable $\F$ taking values in $\{0, 1\}$. The distribution of $\F$ will naturally depend on the input coins $p$. For that reason it is convenient to use the notation $\P_p[\cdot]$ and $\E_p[\cdot]$ to denote the probability measure and expectation of random variables when $(p_1,\hdots, p_n)$ coins are used.

We will say that a factory is finite if the tree $\T$ contains finitely many nodes (and thus, the factory is guaranteed to terminate after a finite number of coin flips).

\subsection{Concentration Bounds}

We will use $X_t \in \{0,1\}^n$ to denote i.i.d. random variables corresponding to the input coin flips. For each $t = 1,2,\hdots$ and $i\in [n]$, the variable $X_{t,i}$ is an independent Bernoulli variable with bias $p_i$. We will also let $\bar X_t$ be a random variable equal to the average of the first $t$ flips of all $n$ coins:
\begin{equation}
\bar X_t := \frac{X_1 + \hdots + X_t}{t} \in [0,1]^n
\end{equation}

We will write $\bar X_{t,i}$ to denote the $i$-th component of $\bar X_t$.
Next, we state two well known concentration bounds. The first is the Hoeffding bound:
\begin{equation}\label{eq:hoeffding}
    \P_p[\abs{\bar X_{t,i} - p_i} > \delta] \leq 2 \exp(-2\delta^2 t)
\end{equation}
The second is the sharper Chernoff bound:
\begin{equation}\label{eq:chernoff0}
    \P_p[\bar X_{t,i} - p_i > \delta] \leq \left(\left( \frac{p_i}{p_i+\delta}\right)^{p_i+\delta} \left( \frac{1-p_i}{1-p_i-\delta}\right)^{1-p_i+\delta}\right)^t
\end{equation}
For values of $p_i$ that are closer to zero (say $p_i < 1/2$ and $\delta < 1/4$) we can bound the second term by a constant. Hence for such $p_i$ we can write:
\begin{equation}\label{eq:chernoff}
    \P_p[\bar X_{t,i} - p_i > \delta] \leq \left( c_\delta \cdot p_i^\delta\right)^t
\end{equation}
where $c_\delta$ is some constant depending on $\delta$.

\subsection{Real Topology}

Here we recall some elementary facts and definitions from real topology. For $p \in \R^n$ and a real $r >0$, we denote the $\ell_\infty$-ball around $p$ of radius $r$ by:
$$\B_\infty(p;r) := \{ x \in \R^n; \norm{x-p}_\infty <r\}$$
Given any set $S \subseteq \R^n$ we denote:
$$\B_\infty(S;r) := \cup_{p \in S} \B_\infty(p;r) = \{ x \in \R^n; \exists\, p \in S \text{ s.t. } \norm{x-p}_\infty <r\}$$
A set $U \subset \R^n$ is open if for every $p \in U$ there is an $r>0$ such that $\B_\infty(p;r) \subseteq U$. An \emph{open cover} of a set $S$ consists of a collection of open sets $U_i$ for $i \in I$ such that $\cup_{i \in I} U_i \supseteq S$. The index set $I$ is possibly infinite and uncountable. We say that the cover $\cup_{i \in I} U_i \supseteq S$ admits a \emph{finite subcover} if there is a finite set $I_0 \subset I$ such that $\cup_{i \in I_0} U_i \supseteq S$.

We will make extensive use of the following elementary fact:


\begin{lemma}[Heine-Borel]\label{lemma:compactness}
A set $K$ is a compact set (i.e. every cover admits a finite subcover) iff it is topologically closed and bounded.
\end{lemma}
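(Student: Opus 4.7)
The plan is to prove both implications separately, as is standard for Heine–Borel.

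For the easy direction (compact implies closed and bounded), I would first show boundedness by covering $K$ with the nested open balls $\B_\infty(0; m)$ for $m \in \N$. These cover all of $\R^n$, hence $K$; a finite subcover gives $K \subseteq \B_\infty(0;M)$ for some $M$. For closedness, I would show the complement of $K$ is open. Fix $q \notin K$. For each $p \in K$, let $r_p = \tfrac{1}{3}\norm{p-q}_\infty > 0$; then $\B_\infty(p; r_p)$ and $\B_\infty(q; r_p)$ are disjoint. The balls $\{\B_\infty(p; r_p)\}_{p \in K}$ form an open cover of $K$, so finitely many suffice, say with centers $p_1, \ldots, p_k$ and radii $r_1, \ldots, r_k$. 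Setting $r = \min_i r_i$, the ball $\B_\infty(q; r)$ is disjoint from each $\B_\infty(p_i; r_i)$, hence disjoint from $K$, so $q$ has an open neighborhood in the complement.

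For the hard direction (closed and bounded implies compact), I would proceed in three steps. First, observe that since $K$ is bounded there exists $N$ with $K \subseteq [-N,N]^n$. Second, prove that a closed subset $K$ of a compact set $C$ is compact: given any open cover $\{U_i\}_{i \in I}$ of $K$, adjoin the open set $\R^n \setminus K$ to obtain an open cover of $C$; extract a finite subcover of $C$, then discard the set $\R^n \setminus K$ to obtain a finite subcover of $K$. Third, prove that the cube $[-N,N]^n$ itself is compact.

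The main obstacle is this last step, and I would handle it by the standard bisection argument. Suppose for contradiction that some open cover $\{U_i\}$ of $C_0 := [-N,N]^n$ admits no finite subcover. Partition $C_0$ into $2^n$ closed sub-cubes by halving each side; at least one of them, call it $C_1$, is not covered by finitely many $U_i$. Iterating produces a nested sequence $C_0 \supseteq C_1 \supseteq C_2 \supseteq \cdots$ of closed cubes, with the side length of $C_k$ equal to $2N \cdot 2^{-k} \to 0$, none finitely covered. By completeness of $\R^n$ (each coordinate gives a nested sequence of closed intervals whose lengths shrink to $0$), the intersection $\bigcap_k C_k$ is a single point $p^* \in C_0$. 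Choose $U_{i^*}$ containing $p^*$; since $U_{i^*}$ is open, some $\B_\infty(p^*;\eps) \subseteq U_{i^*}$, and for $k$ large enough the diameter of $C_k$ is less than $\eps$, forcing $C_k \subseteq U_{i^*}$. This contradicts the assumption that $C_k$ has no finite subcover, completing the proof.
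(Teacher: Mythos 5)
Your proof is correct. Both directions are handled by the standard arguments: the easy direction (compact $\Rightarrow$ closed and bounded) by covering with nested balls for boundedness and a separating-ball argument for closedness; the hard direction (closed and bounded $\Rightarrow$ compact) by reducing to compactness of a cube via the ``closed subset of compact is compact'' lemma, and then establishing compactness of the cube by the bisection (Bolzano-style) argument. I did not find any gaps: the choice $r_p = \tfrac{1}{3}\norm{p-q}_\infty$ does make the two balls disjoint, the minimum $r = \min_i r_i$ does give a neighborhood of $q$ missing $K$, and the nested-cube argument correctly pins down a single point $p^*$ whose covering set must eventually swallow $C_k$, yielding the contradiction.

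Note, however, that the paper does not prove this statement at all. It is stated as a recalled ``elementary fact'' from real topology (the Heine--Borel theorem) and used as a black box in the compactness arguments of Section~4 (the finite subcover step in the proof of Lemma~\ref{lemma:main_lemma}). So there is no paper proof to compare against; you have simply supplied a correct, self-contained proof of a classical theorem that the authors cite by name. That is fine, but if the goal were to match the paper's exposition, the appropriate move would be to cite Heine--Borel rather than reprove it.
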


\subsection{Decomposing the hypercube}

We will decompose the hypercube $[0,1]^n$ into $3^n$ disjoint regions that we will refer as open faces. Each open face will correspond to a partition of $[n] := \{1, \hdots, n\}$ into three sets $A$,$S$ and $B$. We define the open face $F_{A,S,B}$ as:
$$F_{A,S,B} := \left\{p \in [0,1]^n; \quad
\begin{aligned} & p_i = 0, & &i \in A\\
& 0< p_i <1, & &i \in S\\
& p_i = 1, & &i \in B\\ \end{aligned} \right\}$$

For example, the square $[0,1]^2$ is the union of $9$ disjoint open faces: the interior $(0,1)^2$, the four edges $\{0\} \times (0,1), \{1\} \times (0,1), (0,1) \times \{0\}, (0,1) \times \{1\}$ and the four vertices $\{(0,0)\}, \{(0,1)\}, \{(1,0)\}, \{(1,1)\}$.

We will denote by $\bar F_{A,S,B}$ the topological closure of $F_{A,S,B}$ which can be written as: 
$$\bar F_{A,S,B} = \bigcup_{A \subseteq A', B \subseteq B'} F_{A',S',B'}$$

For example, the closure of the open face $\{0\} \times (0,1)$ of the square is: $\{0\} \times [0,1]$ which is the union of three open faces: one representing that edge and two representing the vertices.

\subsection{Additional Notation}

We will be concerned with functions $f:[0,1]^n \rightarrow [0,1]$. Given a subset $S \subseteq [0,1]^n$ and a constant $c\in[0,1]$ we will write $f\vert_S \equiv c$ to denote that $f(p) = c$ for all $p \in S$. If we write $f \equiv c$ (omitting $S$) it means $f$ is the constant function on the entire hypercube.\\

\noindent We will use $[n]$ to denote $\{1,2, \hdots, n\}$. Given a vector $p \in [0,1]^n$ and $S \subseteq [n]$ we will denote:
$$p^S = \prod_{i \in S} p_i \quad \text{and} \quad (1-p)^S = \prod_{i \in S} (1-p_i)$$

\section{Main Theorem}

Our main result is to identify a condition called polynomially-boundedness which together with continuity is necessary and sufficient for the existence of a factory. We define it below:

\begin{definition}[polynomially-bounded function]\label{def:poly_bounded}
A function $f : [0,1]^n \rightarrow [0,1]$ is polynomially bounded if there is an integer $m \geq 0$ and a real constant $c > 0$ such that for each open face $F_{A,S,B}$ of the hypercube the following condition holds:
\begin{equation}\label{eq:poly_bounded_zero}
f \vert_{F_{A,S,B}} \not\equiv 0 \Rightarrow f(p) \geq c \left(  (1-p)^A \cdot p^S (1-p)^S \cdot p^B \right)^m, \forall p \in [0,1]^n
\end{equation}
\end{definition}

Now, we are ready to state the main theorem:

\begin{theorem}\label{thm:main_thm}
A function $f:[0,1]^n \rightarrow [0,1]$ can be implemented by a Bernoulli factory if and only if it is continuous and both $f$ and $1-f$ are polynomially-bounded.
\end{theorem}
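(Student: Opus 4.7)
\emph{Necessity.} Continuity of $f$ follows from a Dini-type argument. Enumerate the leaves of the factory tree $\T$: each leaf $v$ contributes a nonnegative monomial $P_v(p)$ in the $p_i$, $(1-p_i)$ and constants, and a.s.\ termination gives $1 = \sum_v P_v(p)$ pointwise. The partial sums are polynomials converging monotonically to the constant function $1$ on the compact cube, so by Dini's theorem the convergence is uniform; the subsum $f = \sum_{v \text{ labelled } 1} P_v$ is majorized and therefore also uniformly convergent, exhibiting $f$ as a uniform limit of polynomials and hence continuous. For polynomial boundedness on a face $F_{A,S,B}$ with $f \vert_{F_{A,S,B}} \not\equiv 0$, pick $q \in F_{A,S,B}$ with $f(q) > 0$; then some leaf $v^*$ labelled $1$ is reached from $q$ with positive probability, and along the path to $v^*$ every flip of coin $i \in A$ must yield $0$ and every flip of coin $i \in B$ must yield $1$ (else $P_{v^*}(q) = 0$). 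Thus $P_{v^*}(p) = \gamma \prod_{i \in A}(1-p_i)^{\alpha_i} \prod_{i \in B} p_i^{\beta_i} \prod_{i \in S} p_i^{\beta_i}(1-p_i)^{\alpha_i}$ for some $\gamma > 0$ and nonnegative integer exponents. Using $x^k \geq x^m$ whenever $x \in [0,1]$ and $k \leq m$, replace every exponent by the maximal exponent $m$ in this monomial to get $f(p) \geq P_{v^*}(p) \geq \gamma \bigl((1-p)^A p^S(1-p)^S p^B\bigr)^m$; taking $m$ maximal across the finitely many faces produces a single exponent that works globally, and the mirror argument on leaves labelled $0$ handles $1-f$.

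\emph{Sufficiency, construction.} I would follow a Nacu--Peres style scheme. At round $t$, draw $t$ fresh flips of each coin to form the empirical vector $\bar X_t \in [0,1]^n$, and use a sequence of polynomial approximants $g_t$ converging uniformly to $f$ (which exists by the Weierstrass theorem on the compact cube) to decide at each round whether to output $0$, output $1$, or continue to round $t+1$. The consecutive increments $g_{t+1}-g_t$ and residual gap can be realized as reaching probabilities inside a finite factory tree using auxiliary $c$-coins in the standard way, so correctness of the output distribution is automatic once the increments are chosen consistently. The nontrivial statement is almost-sure termination on all of $[0,1]^n$ (including the boundary), which is what sets this apart from Morina's interior-only construction.

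\emph{Main step and main obstacle.} The technical heart, flagged in the introduction, is a concentration bound of the form
$$ \P_p[f(\bar X_t) \geq 1/2] \leq C \cdot \mathrm{poly}(t) \cdot f(p) \quad \text{uniformly in } p \in [0,1]^n, $$
together with its symmetric counterpart for $1-f$. For $p$ near a face $F_{A,S,B}$ on which $f \equiv 0$, continuity ensures that $f(\bar X_t) \geq 1/2$ forces $\bar X_t$ to exit a fixed neighborhood of $\bar F_{A,S,B}$; this exit requires either $\bar X_{t,i}$ to exceed a constant for some $i \in A$, or $\bar X_{t,i}$ to undercut $1$ by a constant for some $i \in B$. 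The sharpened Chernoff bound~(\ref{eq:chernoff}) gives the corresponding one-coordinate probabilities as $(c_\delta p_i^\delta)^t$ and $(c_\delta (1-p_i)^\delta)^t$, and independence across coordinates multiplies them, producing an upper bound whose combinatorial shape matches $\bigl((1-p)^A p^S(1-p)^S p^B\bigr)^{\Theta(t)}$. Polynomial boundedness of $f$ on an adjacent non-zero face then dominates this by a power of $f(p)$. The main obstacle is making the argument uniform in $p$: the ``nearest relevant face'' depends on $p$, and at points where several faces meet one must enforce several combinatorial constraints simultaneously. The plan is to cover $[0,1]^n$ by open neighborhoods of the faces, verify the concentration bound in each neighborhood against that face's polynomial, and glue via compactness (Lemma~\ref{lemma:compactness}); correctly handling the interaction at multi-face corners is exactly the genuinely multiparameter difficulty not present in the one-dimensional or interior-only theories.
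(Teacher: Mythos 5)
Your necessity argument is essentially correct. The polynomial-boundedness half is the same monomial-along-a-path argument as the paper's Lemma~\ref{lemma:necessary1}. The continuity argument via Dini's theorem is a genuine and in fact cleaner alternative to the paper's Lemma~\ref{lemma:necessary3}, which instead bounds the total-variation distance between the first $t$ coin flips under nearby biases; both are valid, and yours is shorter and avoids the explicit $\delta$-$\epsilon$ bookkeeping.

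For sufficiency there are two real gaps. First, the reduction to the concentration lemma. You propose a Nacu--Peres-style scheme with nested polynomial envelopes $g_t$ and dismiss the consistency issue with ``once the increments are chosen consistently.'' That is precisely the delicate part of the Nacu--Peres / Latuszy\'{n}ski machinery: one needs the increments to be nonnegative Bernstein-representable quantities, which requires coupling/consistency conditions that are not automatic. The paper avoids this entirely with a geometric-series decomposition: Lemma~\ref{lemma:main_lemma} lets one write $\tilde{f} = \tfrac{4}{3}(f - \tfrac{1}{4}g)$ where $g(p) = \P_p[f(\bar{X}_t)\geq 1/2]$ is a \emph{finite} factory, and $\tilde{f}$ again satisfies the same hypotheses; unrolling gives $f = \sum_k \tfrac{1}{4}(\tfrac{3}{4})^{k-1} g_k$, and the overall factory samples $k$ geometrically using helper coins and runs the $k$-th finite factory. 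No envelopes, no consistency conditions. You should replace your construction paragraph with this recursion.

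Second, the concentration argument itself is under-specified at exactly the point you flag as the main obstacle. Your description collapses the deviation event to ``some $i \in A$ exceeds a constant or some $i \in B$ undercuts $1$,'' and claims the resulting bound has shape $\bigl((1-p)^A p^S(1-p)^S p^B\bigr)^{\Theta(t)}$. That is not the shape that comes out of Chernoff: only the coordinates that actually deviate contribute, so you get a bound like $\bigl(p^{T\cap A}(1-p)^{T\cap B}\bigr)^{\Theta(t)}$ for the (random) deviation set $T \subseteq A\cup B$. The matching step the paper uses is to condition on the exact deviation set $T$, note that $\bar{X}_t$ then lies within $\delta$ of the \emph{shifted} face $F_{A\setminus T, S\cup T, B\setminus T}$, and split into two cases: if $f\equiv 0$ on that shifted face then Claim~\ref{claim:safe_distance} forces $f(\bar{X}_t) < 1/2$ and the contribution is zero; otherwise polynomial boundedness on the shifted face gives $f(p) \geq c\bigl((1-p)^{A\setminus T}p^{S\cup T}(1-p)^{S\cup T}p^{B\setminus T}\bigr)^m$, which after absorbing the factors bounded away from $0$ near $q$ is comparable to $\bigl(p^{T\cap A}(1-p)^{T\cap B}\bigr)^m$ and can be matched against the Chernoff exponent by choosing $t \geq 4m/\delta$. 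Also note: the compactness covering needs to be restricted to the sublevel set $L = \{f \leq 3/8\}$ (where the bound is nontrivial) rather than all of $[0,1]^n$. Without the decomposition by $T$ and the case split on the shifted face, the step you call the ``genuinely multiparameter difficulty'' remains unresolved.
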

 
\subsection{Sanity Check}

It is useful to check that when we set $n=1$ we recover the result by Keane and O'Brien. For $n=1$ there are $3$ open faces: $\{0\}$, $\{1\}$ and $(0,1)$. For $(0,1)$ the condition that $f$ is polynomially bounded means that:
$$f \not\equiv 0 \Rightarrow f(p) \geq c (p(1-p))^m$$
Observe that $\min(p, 1-p) \leq \frac{1}{2}$, so if we take $k = \lceil \log_2 c \rceil$ then: $c p(1-p)^m \geq \min(p,1-p)^{k+2m}$. Similarly the condition that $1-f$ is polynomially bounded implies that:
$$f \not\equiv 1 \Rightarrow f(p) \leq 1- c (p(1-p))^m$$
Hence if $f \not\equiv 0$ and $f \not\equiv 1$ then:
$$ \min(p,1-p)^{k+2m} \leq f(p) \leq 1 - \min(p,1-p)^{k+2m}$$
which is the one-dimensional condition \eqref{eq:1d_cond}. Notice that the conditions for the open faces $\{0\}$ and $\{1\}$ are superfluous here. For example, for $\{0\}$ our condition says that:
$$f(0)>0 \Rightarrow f(p) \geq c (1-p)$$
Note that this is implied by continuity in a neighborhood of $0$ and by the condition \eqref{eq:1d_cond} elsewhere.

\subsection{Necessary conditions}

The next lemmas show that every function $f$ that is implementable by a factory has $f$ and $1-f$ polynomially bounded.

\begin{lemma}\label{lemma:necessary1} If a function $f : [0,1]^n \rightarrow [0,1]$ can be implemented by a Bernoulli factory and for some open face $F_{A,S,B}$ we have $f\vert_{F_{A,S,B}} \not\equiv 0$ then there exists an integer $m$ and a constant $c$ such that $f(p) \geq c \left(  (1-p)^A \cdot p^S (1-p)^S \cdot p^B \right)^m, \forall p \in [0,1]^n$.
\end{lemma}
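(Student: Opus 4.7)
The plan is to exhibit a single path in the factory tree that (i) reaches a $1$-leaf with positive probability, and (ii) whose reaching-probability as a polynomial in $p$ is already bounded below by the desired face polynomial.

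First I would fix a point $q \in F_{A,S,B}$ with $f(q) > 0$, which exists because $f|_{F_{A,S,B}} \not\equiv 0$. Since the factory is valid at $q$, it terminates almost surely, so $f(q)$ equals the sum over all $1$-labeled leaves $\ell$ of the reaching probability $\P_q[\text{reach }\ell]$. A binary tree has only countably many leaves, so from $f(q) > 0$ at least one particular $1$-leaf $\ell^*$ satisfies $\P_q[\text{reach }\ell^*] > 0$. This $\ell^*$ sits at some finite depth $d$, and the path from the root to $\ell^*$ visits finitely many internal nodes, each labelled either with some $p_i$ or with a known constant in $(0,1)$.

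Next I would write out the reaching probability of $\ell^*$ at an arbitrary input $p$. Along the fixed path, let $b_i$ be the number of internal nodes where the coin $p_i$ is flipped and the path proceeds along the $1$-child, let $a_i$ be the analogous count for the $0$-child, and let $C \in (0,1]$ be the product of the probabilities contributed by the constant-labeled internal nodes (this product is strictly positive). Then
\begin{equation*}
\P_p[\text{reach }\ell^*] \;=\; C \prod_{i \in [n]} p_i^{b_i}(1-p_i)^{a_i}.
\end{equation*}
Positivity of this quantity at $p = q$ forces $b_i = 0$ for every $i \in A$ (since $q_i = 0$) and $a_i = 0$ for every $i \in B$ (since $q_i = 1$); there is no constraint on $a_i, b_i$ for $i \in S$ because $q_i \in (0,1)$. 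Hence
\begin{equation*}
\P_p[\text{reach }\ell^*] \;=\; C \prod_{i \in A}(1-p_i)^{a_i} \prod_{i \in S} p_i^{b_i}(1-p_i)^{a_i} \prod_{i \in B} p_i^{b_i}.
\end{equation*}

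Finally I would pick $m = \max_{i} \max(a_i, b_i)$, which is finite because the path has length $d$. Since $p_i, 1-p_i \in [0,1]$, replacing each exponent by the larger value $m$ only decreases the product, yielding
\begin{equation*}
\P_p[\text{reach }\ell^*] \;\geq\; C \bigl((1-p)^A \cdot p^S(1-p)^S \cdot p^B\bigr)^m.
\end{equation*}
Because $f(p) \geq \P_p[\text{reach }\ell^*]$ by monotonicity of the sum over $1$-leaves, taking $c := C > 0$ completes the proof. The main thing to be careful about is the existence step: one must justify picking a \emph{single} leaf of positive probability out of a possibly infinite collection, which is where the countability of leaves in a binary tree is used; everything else is a direct manipulation of the path polynomial using the definition of $F_{A,S,B}$.
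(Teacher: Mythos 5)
Your proposal is correct and follows the same approach as the paper's proof: locate a single $1$-leaf reachable with positive probability at some $q \in F_{A,S,B}$, observe that its path must take only $0$-edges for $A$-coins and only $1$-edges for $B$-coins, and lower-bound the resulting Bernstein monomial by replacing all exponents with their maximum. Your explicit appeal to the countability of leaves to extract a single positive-probability leaf is a slightly more careful justification of a step the paper leaves implicit, but the argument is the same.
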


\begin{proof}
Consider a rooted binary tree (as in Definition \ref{def:factory}) implementing $f$. Since $f(p) > 0$ for some coins in $p \in F_{A,S,B}$, there must be a path in the tree reaching a leaf labelled $1$ that always takes the $0$-edge when we flip a coin with an index in $A$ and always takes the $1$-edge when we flip a coin with an index in $B$, or else this path would never be reachable using the coins $p$. Each path in the tree corresponds to a polynomial of the form $c \cdot \prod_{i \in [n]} p_i^{g_i} (1-p_i)^{h_i}$ (a ``Bernstein monomial''), where $c$ is the product of the helper coins flipped along the path, $g_i$ is the number of $0$-edges takes after a $p_i$-flip and $h_i$ is the number of $1$-edges takes after a $p_i$-flip. By the observation above $g_i = 0$ for $i \in A$ and $h_i = 0$ for $i \in B$. Taking $m = \max_i \max(g_i, h_i)$ we obtain the inequality in the statement of the lemma.
\end{proof}

\begin{lemma}\label{lemma:necessary2} If a function $f : [0,1]^n \rightarrow [0,1]$ can be implemented by a Bernoulli factory and for some open face $F_{A,S,B}$ we have $f\vert_{F_{A,S,B}} \not\equiv 1$ then there exists an integer $m$ and a constant $c$ such that $1-f(p) \geq c \left(  (1-p)^A \cdot p^S (1-p)^S \cdot p^B \right)^m, \forall p \in [0,1]^n$.
\end{lemma}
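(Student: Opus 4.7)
The plan is to deduce this lemma directly from Lemma \ref{lemma:necessary1} by a simple duality: the function $1-f$ is implementable whenever $f$ is. Concretely, given the binary tree $\T$ implementing $f$ as in Definition \ref{def:factory}, form a new tree $\T'$ identical to $\T$ but with every leaf label flipped ($0 \leftrightarrow 1$). Running the same coin-flip process, the output of $\T'$ is $1$ exactly when the output of $\T$ would have been $0$, so $\T'$ is a valid Bernoulli factory implementing the function $1-f$.

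Now the hypothesis $f\vert_{F_{A,S,B}} \not\equiv 1$ translates into $(1-f)\vert_{F_{A,S,B}} \not\equiv 0$, so Lemma \ref{lemma:necessary1} applies to $1-f$ and yields integers $m$ and a constant $c>0$ with
\[
1 - f(p) \;\geq\; c\left((1-p)^A \cdot p^S (1-p)^S \cdot p^B\right)^m,\qquad \forall p \in [0,1]^n,
\]
which is exactly the conclusion of Lemma \ref{lemma:necessary2}.

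There is no real obstacle here; the only thing to verify is that the relabelling operation preserves validity of the factory (i.e.\ termination almost surely), but this is immediate because $\T'$ has the same tree structure and the same internal labels as $\T$, and hence reaches a leaf under $p$ with probability $1$ whenever $\T$ does. If one preferred a self-contained argument avoiding the reduction, one could instead mimic the proof of Lemma \ref{lemma:necessary1} directly: pick a point $p^\star \in F_{A,S,B}$ with $f(p^\star) < 1$, observe that there must exist a path in $\T$ from the root to a leaf labelled $0$ that is consistent with $p^\star$ (taking the $0$-edge after every flip of a coin with index in $A$ and the $1$-edge after every flip of a coin with index in $B$), and read off the Bernstein monomial associated with this path as a lower bound on $1-f(p)$, exactly as in Lemma \ref{lemma:necessary1}.
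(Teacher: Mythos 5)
Your proposal is correct and matches the paper's approach: the paper simply says ``Same proof as the previous lemma swapping $0$ and $1$,'' which is exactly the symmetry you exploit (whether packaged as ``flip all leaf labels to implement $1-f$'' or as your self-contained variant tracking paths to a $0$-labelled leaf). Both versions you give are valid and equivalent to the paper's intended argument.
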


\begin{proof}
Same proof as the previous lemma swapping $0$ and $1$.
\end{proof}

The continuity condition is more intuitive: if two vectors of coins $p',p'' \in [0,1]^n$ are close, the finite sequence of coin flips generated by them will also be close in total variation distance. Since the output only depends on the sequence of coins flips observed, the distribution of outputs must also be close. This intuition is formalized by the following lemma, whose proof can be found in the appendix.

\begin{lemma}\label{lemma:necessary3}
If a function $f : [0,1]^n \rightarrow [0,1]$ can be implemented by a Bernoulli factory then it is continuous.
\end{lemma}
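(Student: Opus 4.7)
The plan is to approximate the factory's output by finite truncations and use compactness to upgrade pointwise convergence to uniform convergence. Let $\F$ denote the infinite binary tree implementing $f$. For each integer $T \geq 1$, define the "truncated" probabilities
\[
f_T(p) := \P_p[\F \text{ halts at a leaf labelled } 1 \text{ within } T \text{ coin flips}],
\]
\[
g_T(p) := \P_p[\F \text{ halts at a leaf labelled } 0 \text{ within } T \text{ coin flips}],
\]
and let $h_T(p) := 1 - f_T(p) - g_T(p)$ be the probability that $\F$ is still at an internal node after $T$ flips. First I would observe that each $f_T, g_T, h_T$ is a polynomial in $p$, since $f_T(p)$ and $g_T(p)$ are finite sums of Bernstein monomials, one for each root-to-leaf path of length at most $T$ ending in the appropriate label (exactly the monomials used in the proof of Lemma \ref{lemma:necessary1}). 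In particular, all three are continuous on the hypercube.

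Next, I would note two monotonicity/convergence properties. The sequence $h_T(p)$ is nonincreasing in $T$ (extending the truncation can only decrease the probability of having not terminated), and since $\F$ is a valid factory, $h_T(p) \downarrow 0$ pointwise for every $p \in [0,1]^n$. Also, $f_T(p) \leq f(p) \leq f_T(p) + h_T(p)$, because the additional contribution to $f(p) - f_T(p)$ comes only from paths of length $>T$ ending in a $1$-leaf, and the total measure of such paths is at most $h_T(p)$.

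The key step is to upgrade pointwise convergence of $h_T \to 0$ to uniform convergence on the compact set $[0,1]^n$. This is exactly the setting of Dini's theorem: a monotone sequence of continuous functions $h_T$ on a compact space converging pointwise to a continuous function (here, the zero function) converges uniformly. Thus $\sup_{p \in [0,1]^n} h_T(p) \to 0$, and therefore the continuous polynomials $f_T$ converge uniformly to $f$. A uniform limit of continuous functions on a compact set is continuous, which yields the lemma.

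The main obstacle is ensuring that the convergence $h_T \to 0$ is uniform rather than merely pointwise; without uniformity one cannot conclude continuity of $f$. This is resolved cleanly by Dini's theorem, applied to $h_T$ (whose pointwise limit is the continuous zero function), rather than directly to $f_T$ (whose pointwise limit $f$ is what we are trying to prove continuous, so Dini's hypotheses do not apply directly to $f_T$).
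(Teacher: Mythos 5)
Your proof is correct, and it takes a genuinely different route from the paper's. The paper proves continuity pointwise: fix a point $a$, pick a truncation depth $t$ so that the factory terminates within $t$ flips with probability at least $1-\epsilon/4$ under $a$, then choose $\delta$ so the total variation distance between the first $t$ coin flips under $p$ and under $a$ is small whenever $\norm{p-a}<\delta$, and finally combine the tail bound with the TV bound to get $\abs{f(a)-f(p)}<\epsilon$. You instead observe that the truncations $f_T$ are Bernstein polynomials with $f_T \leq f \leq f_T + h_T$, that $h_T$ is a nonincreasing sequence of continuous functions converging pointwise to $0$ on the compact cube (validity of the factory), and then invoke Dini's theorem to get $\sup_p h_T(p) \to 0$, so $f_T \to f$ uniformly and $f$ is continuous as a uniform limit. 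Your approach is shorter and cleaner, replacing the hands-on TV estimate with a single appeal to a standard theorem; it also yields the slightly stronger conclusion that the polynomial truncations converge to $f$ uniformly over the whole hypercube, not merely that $f$ is continuous at each point. The paper's argument, by contrast, is more elementary and self-contained, and localizes to a single point $a$, which may make it easier to adapt when the domain is not compact (though that never arises here). One small point worth making explicit: the monotone sequence fed to Dini must have a continuous pointwise limit, and here that limit is the zero function precisely because a Bernoulli factory implementing $f$ is required to terminate almost surely for every $p \in [0,1]^n$; you say this but it is the load-bearing hypothesis, so it deserves emphasis.
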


\subsection{Sufficient conditions}\label{subsec:sufficient}
 
To prove that continuous and polynomially-bounded are sufficient conditions for implementability, we will use the following lemma (Lemma \ref{lemma:main_lemma}), which is the main technical lemma of the paper. We will prove it in the next section. Before we do this, however, we will assume it is true and use it to prove Theorem \ref{thm:main_thm}.
 
\begin{lemma}\label{lemma:main_lemma}
Let $f$ be a continuous and polynomially bounded function.
Then there is an integer $t_0$ such that for $t \geq t_0$ it holds that:
\begin{equation}\label{eq:main_lemma}
f(p) - \frac{1}{4} \cdot \P_p \left[ f(\bar X_t) \geq \frac{1}{2} \right] \geq \frac{1}{8} f(p), \forall p \in [0,1]^n
\end{equation}
\end{lemma}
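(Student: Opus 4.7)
The goal is to show $\P_p[f(\bar X_t) \ge 1/2] \le (7/2) f(p)$ for $t \ge t_0$ and all $p \in [0,1]^n$. Two easy cases handle most of the parameter space: if $f(p) \ge 2/7$ the bound is trivial; if $f(p) = 0$, then polynomial boundedness forces $f\vert_{F_{A,S,B}} \equiv 0$ on the face $F_{A,S,B}$ containing $p$ (otherwise the lower bound would give $f(p) \ge c \prod_{i \in S}(p_i(1-p_i))^m > 0$, since $p_i \in (0,1)$ for $i \in S$), and iterating this argument shows $f\vert_{F'} \equiv 0$ on every \emph{reachable} face $F' = F_{A',S',B'}$ (those with $A' \supseteq A$, $B' \supseteq B$), so $f(\bar X_t) = 0$ almost surely and $\P_p = 0$.

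For the main case $0 < f(p) < 2/7$: by uniform continuity of $f$ on the compact cube, pick $\delta > 0$ with $\norm{x-y}_\infty \le \delta \Rightarrow \abs{f(x)-f(y)} < 3/14$, which forces $f(\bar X_t) \ge 1/2$ to imply $\norm{\bar X_t - p}_\infty > \delta$. Decompose by the face of $\bar X_t$: for each reachable $F' = F_{A \cup T_0, S', B \cup T_1}$ (with $T_0, T_1 \subseteq S$ disjoint and $S' = S \setminus (T_0 \cup T_1)$) satisfying $f\vert_{F'} \not\equiv 0$, independence of the coordinates yields
\begin{equation*}
\P_p[\bar X_t \in F',\, f(\bar X_t) \ge 1/2] \le \prod_{i \in T_0}(1-p_i)^t \prod_{i \in T_1} p_i^t \cdot \P_{p_{S'}}[g_{F'}(\bar X_{t,S'}) \ge 1/2],
\end{equation*}
where $g_{F'}$ is the continuous (polynomially-bounded) restriction of $f$ to $F'$ extended to $[0,1]^{|S'|}$. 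The plan is to bound each summand by $\epsilon_{F',t} \cdot Q_{F'}(p)^m$ with $\epsilon_{F',t} \to 0$: the prefactor $\prod_{T_0}(1-p_i)^t \prod_{T_1}p_i^t$ already matches the $(1-p)^{T_0} p^{T_1}$ portion of $Q_{F'}(p)^m$ (the excess exponent $t-m$ providing the shrinkage), and the inner probability must supply the missing $\prod_{S'}(p_i(1-p_i))^m$ factor. For non-main faces ($T_0 \cup T_1 \ne \emptyset$, so $\abs{S'} < n$) I would induct on $n$, combined with the polynomial upper bound on $g_{F'}$ derived from $1-f$ being polynomially bounded; for the main face, apply Chernoff~\eqref{eq:chernoff} together with $\min(p_i, 1-p_i) \le 2 p_i(1-p_i)$ to bound each coordinate's deviation by $C_t (p_i(1-p_i))^m$ with $C_t \to 0$. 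Summing over the $\le 3^n$ reachable faces and applying the polynomial lower bound $f(p) \ge c Q_{F'}(p)^m$ gives $\P_p \le (3^n/c)\max_{F'}\epsilon_{F',t} \cdot f(p) \le (7/2) f(p)$ for $t_0$ large enough.

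The main obstacle is extracting the joint \emph{product} $\prod_{S'}(p_i(1-p_i))^m$ from coordinate-wise Chernoff bounds: a naive union bound yields only the (much weaker) sum of such terms, which fails dramatically when several $p_i$ approach the boundary simultaneously (the sum stays bounded while the product goes to zero, and the polynomial lower bound on $f(p)$ is a product). Overcoming this is precisely the ``new concentration argument'' advertised in the introduction, which relates the joint large-deviation pattern of $\bar X_t$ to the polynomial associated with the face in which the deviating coordinates are free, via a careful combinatorial argument over the face structure of the hypercube.
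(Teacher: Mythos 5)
Your treatment of the easy cases is correct: $f(p)\geq 2/7$ is trivial (indeed the paper uses the looser $3/8$ threshold for the same reason), and for $f(p)=0$ you correctly note via Claim~\ref{claim:zero_open_face} and continuity that $f\equiv 0$ on the entire closed face $\bar F_{A,S,B}$, which is exactly the support of $\bar X_t$, so $\P_p[f(\bar X_t)\geq 1/2]=0$. But the crux of the lemma is the uniform bound over the set $\{0< f < 3/8\}$, and there your proposal has a genuine gap that you yourself flag: after decomposing by the face of $\bar X_t$ and applying coordinate-wise Chernoff, you still need a joint product $\prod_{i\in S'} (p_i(1-p_i))^m$ and correctly observe that a union bound over coordinates cannot produce it. You do not resolve this, you only note that resolving it is ``precisely the new concentration argument advertised in the introduction,'' which means the hard part of the proof is left open. (You also invoke $1-f$ being polynomially bounded, which is not among the hypotheses of this lemma; it is only assumed when the lemma is applied in Lemma~\ref{lemma:recursive_step}.)

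It is worth pointing out that the paper avoids the obstacle you identify rather than overcoming it head-on. The proof does not attempt a single global bound; instead it proves a \emph{local} claim (Claim~\ref{claim:ball}): for each fixed $q$ with $f(q)=0$ lying in a face $F_{A,S,B}$, there is a radius $r_q$ and a time $t_q$ making \eqref{eq:main_lemma} hold on $\B_\infty(q;r_q)\cap L$, and then Heine--Borel compactness of $L$ extracts a single uniform $t_0$ from finitely many of these. Inside the ball $\B_\infty(q;r_q)$, every coordinate $i\in S$ satisfies $p_i, 1-p_i > r_q$, so the factors $\prod_{i\in S}(p_i(1-p_i))^m$ in the polynomial lower bound are simply absorbed into a constant and never need to be matched by Chernoff. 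The paper's decomposition is therefore not by the open face of $\bar X_t$ (equivalently, which $S$-coordinates hit $\{0,1\}$), but by the set $T\subseteq A\cup B$ of near-boundary coordinates that have a large deviation; for each such $T$, the Chernoff terms $\prod_{i\in T\cap A}(c'p_i^{\delta/2})^t\prod_{i\in T\cap B}(c'(1-p_i)^{\delta/2})^t$ are compared directly against the surviving factors $(p^{T\cap A}(1-p)^{T\cap B})^m$ of the polynomial lower bound, and shrinking $r_q$ closes the gap. So the missing idea in your proposal is the compactness reduction to balls centered at points of $L$, which is precisely what makes the interior coordinates harmless and turns the advertised ``new concentration argument'' into a manageable coordinate-wise Chernoff computation over only the boundary coordinates.
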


Lemma \ref{lemma:main_lemma} will allow us to decompose $f(p)$ into two smaller functions: one which we can simulate with a finite Bernoulli factory (this will be $\P_p \left[ f(\bar X_t) \geq \frac{1}{2} \right]$), and a remaining piece with probability mass at most $3/4$ that we can decompose recursively. 

\begin{lemma}\label{lemma:recursive_step}
Let $f : [0,1]^n \rightarrow [0,1]$ be a continuous function such that $f$ and $1-f$ are polynomially bounded. Then there exists a function $g:[0,1]^n \rightarrow [0,1]$ that is implementable by a finite Bernoulli factory such that $\tilde{f}$ defined as follows:
$$\tilde{f}(p) = \frac{4}{3}\left( f(p) - \frac{1}{4} g(p) \right)$$
maps $[0,1]^n$ to $[0,1]$, is continuous and $\tilde{f}$ and $1-\tilde{f}$ are polynomially bounded.
\end{lemma}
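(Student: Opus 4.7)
The plan is to take $g(p) := \P_p[f(\bar X_t) \geq \tfrac{1}{2}]$ where $t$ is chosen large enough that Lemma \ref{lemma:main_lemma} applies simultaneously to $f$ and to $1-f$; both functions are polynomially bounded by hypothesis and continuous (the continuity of $1-f$ is immediate from that of $f$), so the lemma supplies a common threshold. This $g$ is implementable by a finite Bernoulli factory: flip each of the $n$ input coins exactly $t$ times, compute the empirical mean vector $\bar X_t \in [0,1]^n$, and output $1$ iff $f(\bar X_t) \geq \tfrac{1}{2}$. Because $\bar X_t$ takes only finitely many values, $g$ is a polynomial in $p$, so $\tilde f$ is continuous.

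For the range condition $\tilde f \in [0,1]$ I combine the two applications of Lemma \ref{lemma:main_lemma}. Applied to $f$, it gives $\tilde f(p) \geq \tfrac{1}{6} f(p) \geq 0$ immediately. Applied to $1-f$ it gives $(1-f(p)) - \tfrac{1}{4} \P_p[f(\bar X_t) \leq \tfrac{1}{2}] \geq \tfrac{1}{8}(1-f(p))$; combined with the obvious bound $\P_p[f(\bar X_t) \leq \tfrac{1}{2}] \geq 1-g(p)$, a short rearrangement produces $f(p) - \tfrac{1}{4}g(p) \leq \tfrac{5}{8} + \tfrac{1}{8} f(p) \leq \tfrac{3}{4}$, so $\tilde f(p) \leq 1$, and this is equivalent to the symmetric inequality $1-\tilde f(p) \geq \tfrac{1}{6}(1-f(p))$ that I will use below.

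For polynomial boundedness of $\tilde f$ and of $1-\tilde f$ I argue one open face at a time and push the hypotheses on $f$ through the inequalities $\tilde f \geq \tfrac{1}{6}f$ and $1-\tilde f \geq \tfrac{1}{6}(1-f)$. If $\tilde f \not\equiv 0$ on $F_{A,S,B}$, then some $p \in F_{A,S,B}$ has $\tilde f(p) > 0$; since $g(p) \geq 0$, the definition of $\tilde f$ then forces $f(p) > 0$, so $f \not\equiv 0$ on $F_{A,S,B}$ and the polynomial bound on $f$ from Definition \ref{def:poly_bounded} transfers to $\tilde f$ with constant $c/6$. The analogous step for $1-\tilde f$ is the main obstacle because the inequality $1-\tilde f \geq \tfrac{1}{6}(1-f)$ only yields the wrong direction of implication; I must instead show directly that $f \equiv 1$ on $F_{A,S,B}$ forces $\tilde f \equiv 1$ on $F_{A,S,B}$, so that contrapositively $1-\tilde f \not\equiv 0$ on the face implies $1-f \not\equiv 0$ on the face. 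This is where the continuity of $f$ is essential: $f \equiv 1$ on $F_{A,S,B}$ extends by continuity to $f \equiv 1$ on the closure $\bar F_{A,S,B}$, and for any $p \in F_{A,S,B}$ the coordinates of $\bar X_t$ indexed by $A$ are identically $0$ and those indexed by $B$ are identically $1$, so $\bar X_t \in \bar F_{A,S,B}$ with probability one; therefore $f(\bar X_t) = 1$ almost surely, $g(p) = 1$, and $\tilde f(p) = 1$. Once this closure step is granted, combining the exponents and constants supplied by polynomial boundedness of $f$ and $1-f$ across the finitely many open faces is routine bookkeeping.
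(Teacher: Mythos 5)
Your proof is correct and closely mirrors the paper's: same choice of $g(p) = \P_p[f(\bar X_t) \geq \tfrac{1}{2}]$, same double invocation of Lemma \ref{lemma:main_lemma} for $f$ and $1-f$, and the same derived bracketing $\tfrac{1}{6}f(p) \leq \tilde f(p) \leq 1 - \tfrac{1}{6}(1-f(p))$. The one genuine deviation is in how you show that $f \equiv 1$ on an open face $F_{A,S,B}$ forces $\tilde f \equiv 1$ there. You argue probabilistically: for $p \in F_{A,S,B}$ the $A$-coordinates of $\bar X_t$ are deterministically $0$ and the $B$-coordinates deterministically $1$, so $\bar X_t \in \bar F_{A,S,B}$ almost surely; continuity extends $f \equiv 1$ to the closure, giving $g(p)=1$ and hence $\tilde f(p)=\tfrac{4}{3}(1-\tfrac{1}{4})=1$. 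The paper reaches the same conclusion purely algebraically: substituting $f(p)=1$ into the inequality $\tfrac{4}{3}\bigl(f(p) - \tfrac{1}{4}g(p)\bigr) \leq 1 - \tfrac{1}{6}(1-f(p))$ yields $1-\tfrac{1}{4}g(p) \leq \tfrac{3}{4}$, hence $g(p)\geq 1$ and so $g(p)=1$. Your parenthetical worry that ``$1-\tilde f \geq \tfrac{1}{6}(1-f)$ only yields the wrong direction'' is accurate for that collapsed pointwise bound on $\tilde f$, but the paper's underlying chain still retains $g(p)$ as a free quantity, which is what makes its algebraic route close the step; your probabilistic detour is a valid alternative and arguably more transparent about why the construction respects the face structure. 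The rest (finiteness of the factory for $g$, continuity of $\tilde f$, and the transfer of the polynomial bounds by a $\tfrac{1}{6}$ factor) matches the paper exactly.
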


\begin{proof}
We start by applying Lemma \ref{lemma:main_lemma} to both $f$ and $1-f$. We know that there are integers $t_0$ and $t_1$ such that:
$$f(p) - \frac{1}{4} \cdot \P_p \left[ f(\bar X_t) \geq \frac{1}{2} \right] \geq \frac{1}{8} f(p), \forall t \geq t_0, p \in [0,1]^n$$
$$1-f(p) - \frac{1}{4} \cdot \P_p \left[ 1-f(\bar X_t) \geq \frac{1}{2} \right] \geq \frac{1}{8} \left[1-f(p)\right], \forall t \geq t_1, p \in [0,1]^n$$
Note that we can rewrite:
$$\P_p \left[ 1-f(\bar X_t) \geq \frac{1}{2} \right] =
\P_p \left[ f(\bar X_t) \leq \frac{1}{2} \right] = 1-\P_p \left[ f(\bar X_t) > \frac{1}{2} \right] \geq 1-\P_p \left[ f(\bar X_t) \geq \frac{1}{2} \right]$$
Replacing it in the previous expression we obtain:
$$f(p) - \frac{1}{4} \P_p \left[ f(\bar X_t) \geq \frac{1}{2} \right] \leq \frac{3}{4} - \frac{1}{8} [1-f(p)], \forall t \geq t_1, p \in [0,1]^n$$
Now, if we set $t = \max(t_0, t_1)$ and define a function $g:[0,1]^n \rightarrow [0,1]$ as:
$$g(p) = \P_p \left[ f(\bar X_t) \geq \frac{1}{2} \right]$$
then we have that:
\begin{equation}\label{eq:poly_bounds}
\frac{1}{6} f(p) \leq \frac{4}{3}\left( f(p) - \frac{1}{4} g(p) \right) \leq 1- \frac{1}{6}[1-f(p)]
\end{equation}
First observe that \eqref{eq:poly_bounds} implies that $\tilde{f}(p) \in [0,1]$.

Now, let's argue that $\tilde{f}$ is polynomially bounded. First observe that if $f(p) = 0$ at some point $p$ then we must have $\tilde{f}(p) = 0$ since the first inequality implies that $0 \leq - \frac{1}{4}g(p)$. Since $g(p) \geq 0$ we must have $g(p) = 0$ and hence $\tilde f(p) = \frac{4}{3} f(p) = 0$. 

Therefore, for any open face $F_{A,S,B}$ we have $f \vert_{F_{A,S,B}} \equiv 0$ iff $\tilde f \vert_{F_{A,S,B}} \equiv 0$. If $\tilde f \vert_{F_{A,S,B}} \not\equiv 0$ then by the first inequality in \eqref{eq:poly_bounds} and the fact that $f$ is polynomially bounded, we have:
$$\tilde{f}(p) \geq \frac{1}{6} f(p) \geq \frac{c}{6} ((1-p)^A p^S (1-p)^S p^B)^m$$

The same argument can be repeated with $1-f$ instead of $f$ to argue this function is also polynomially bounded. First observe that if $f(p) = 1$ at some point $p$ we must have $\tilde{f}(p) = 1$ since the last inequality would imply that $1-\frac{1}{4} g(p) \leq \frac{3}{4}$. Since $g(p) \leq 1$ this must imply that $g(p) = 1$ and hence $\tilde f(p) = \frac{4}{3}(1-\frac{1}{4})=1$. Therefore $f \vert_{F_{A,S,B}} \equiv 1$ iff $\tilde f \vert_{F_{A,S,B}} \equiv 1$. If $\tilde f \vert_{F_{A,S,B}} \not\equiv 1$ then by the second inequality in \eqref{eq:poly_bounds} and the fact that $1-f$ is polynomially bounded, we have:
$$1-\tilde{f}(p) \geq \frac{1}{6} [1-f(p)] \geq \frac{c}{6} ((1-p)^A p^S (1-p)^S p^B)^m$$
The only part left to argue is that $g$ is implementable by a finite Bernoulli factory, but this follows by the definition of $g$: one can implement it by taking $t$ samples of each coin, building $\bar X_t$ and checking whether $f(\bar X_t) \geq \frac{1}{2}$.
\end{proof}

We now can derive the proof of Theorem \ref{thm:main_thm} by recursively applying the previous lemma:

\begin{proof}[Proof of Theorem \ref{thm:main_thm}]
We will define a sequence of functions $f_1(p), f_2(p), \hdots$ as follows. First we define $f_1(p) = f(p)$. Then for every $k \geq 1$ let $g_k$ correspond to the $g$ function in Lemma \ref{lemma:recursive_step} obtained from $f_k$. Then define $f_{k+1}(p) = \frac{4}{3}\left( f_k(p) - \frac{1}{4} g_k(p) \right)$. Unrolling the recursion we get:
$$f(p) = \left( \frac{3}{4} \right)^k f_{k+1}(p) + \sum_{s=1}^k \frac{1}{4} \left( \frac{3}{4} \right)^{s-1} g_s(p)$$
Since $f_{k+1}(p) \in [0,1]$, it means that:
$0 \leq f(p) - \sum_{s=1}^k \frac{1}{4} \left( \frac{3}{4} \right)^{s-1} g_s(p) \leq \left( \frac{3}{4} \right)^k, \forall p \in[0,1]^n $
or in other words, the series
$\sum_{k=1}^\infty \frac{1}{4} \left( \frac{3}{4} \right)^{k-1} g_k(p)$
converges uniformly to $f(p)$. This observation gives a natural algorithm for sampling from $f(p)$: first sample an index $k \in \Z_{>0}$ with probability $\frac{1}{4} \left( \frac{3}{4} \right)^{k-1}$. Then use the Bernoulli factory for $g_k(p)$ constructed in Lemma \ref{lemma:recursive_step} to sample $1$ with that probability. 
\end{proof}

\section{Proof of Lemma \ref{lemma:main_lemma}}

The heart of the argument is establishing Lemma \ref{lemma:main_lemma}. Note that as $t \rightarrow \infty$ we know that 
$$\P_p\left[f(\bar X_t) \geq \frac{1}{2}\right] \rightarrow \one\left\{f(p) > \frac{1}{2} \right\}$$
if $f(p) \neq \frac{1}{2}$, so clearly it holds that for each $p \in [0,1]^n$ there is a large enough $t$ such that the inequality in the lemma holds. The difficulty in the argument is to show a single $t$ holds for all points $p$ simultaneously.

The argument will proceed as follows: first let's define the region where the values of $f$ are small:
$$L = \left\{ p \in [0,1]^n; f(p) \leq \frac{3}{8} \right\}$$

It is simple to see that if $p \notin L$ then for any value of $t$ it holds that:
$$f(p) - \frac{1}{4} \P_p\left[f(\bar X_t) \geq \frac{1}{2}\right] \geq \frac{3}{8} - \frac{1}{4} = \frac{1}{8}  \geq \frac{1}{8} f(p)$$

\paragraph{Proof strategy} To argue that there exists some $t$ that holds for all $q \in L$, we will prove the following claim:

\begin{claim}\label{claim:ball}
For every $q \in L$ there is a radius $r_q>0$ and an integer $t_q$ such that equation \eqref{eq:main_lemma} in the statement of Lemma \ref{lemma:main_lemma} holds for all $p \in L \cap \B_\infty(q;r_q)$ and $t \geq t_q$.
\end{claim}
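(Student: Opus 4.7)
My plan is to fix $q \in L$ and split on whether $f(q) > 0$. If $f(q) > 0$, continuity lets me pick $r_q$ with $f(p) \geq f(q)/2$ on $\B_\infty(q; r_q)$; since $L$ forces $f(p) \leq 3/8 < 1/2$, uniform continuity of $f$ on $[0,1]^n$ gives $\epsilon_0 > 0$ such that $\|\bar X_t - p\|_\infty < \epsilon_0$ implies $f(\bar X_t) < 1/2$. Hoeffding \eqref{eq:hoeffding} and a union bound over coordinates then give $\P_p[f(\bar X_t) \geq 1/2] \leq 2n e^{-2\epsilon_0^2 t}$, which shrinks to $0$ in $t$, so for $t_q$ large this is below $\tfrac{7}{2}\cdot \tfrac{f(q)}{2} \leq \tfrac{7}{2} f(p)$, giving \eqref{eq:main_lemma}.

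The substantive case is $f(q) = 0$. Let $F_{A, S, B}$ be the open face containing $q$. The contrapositive of polynomial-boundedness applied at $q$ rules out $f|_{F_{A', S', B'}} \not\equiv 0$ for any face with $A \subseteq A'$ and $B \subseteq B'$: for such a face the polynomial $((1-q)^{A'} q^{S'} (1-q)^{S'} q^{B'})^m$ is strictly positive at $q$, which would force $f(q) > 0$. Hence $f \equiv 0$ on the entire closure $\bar F_{A, S, B}$. Now pick $r_q$ small enough that (i) $\B_\infty(q; r_q)$ meets only faces $F_{A', S', B'}$ with $A' \subseteq A$ and $B' \subseteq B$, (ii) coordinates $p_i$ for $i \in S$ stay inside some $[a, b] \subset (0,1)$, and (iii) $r_q$ is small enough for Chernoff \eqref{eq:chernoff} to apply cleanly to coordinates in $A \setminus A_p$ and $B \setminus B_p$. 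For $p \in \B_\infty(q; r_q) \cap L$ lying in face $F_{A_p, S_p, B_p}$, two sub-cases are immediate: if $f|_{F_{A_p, S_p, B_p}} \equiv 0$ then the same vanishing argument gives $f \equiv 0$ on $\bar F_{A_p, S_p, B_p}$, and since $\bar X_t \in \bar F_{A_p, S_p, B_p}$ almost surely for input $p$, both sides of \eqref{eq:main_lemma} are zero. So the only remaining case is $f|_{F_{A_p, S_p, B_p}} \not\equiv 0$ with $(A_p, B_p) \neq (A, B)$: there polynomial-boundedness gives $f(p) \geq c_1 \prod_{i \in A \setminus A_p} p_i^m \prod_{i \in B \setminus B_p} (1 - p_i)^m$ for a ball-dependent constant $c_1 > 0$, and I need to upper bound $\P_p[f(\bar X_t) \geq 1/2]$ by $\tfrac{7}{2}$ times this product.

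For that bound, continuity together with $f \equiv 0$ on $\bar F_{A, S, B}$ yields $\epsilon_0 > 0$ such that $f(\bar X_t) \geq 1/2$ forces some $i \in A$ to have $\bar X_{t,i} \geq \epsilon_0$ or some $i \in B$ to have $\bar X_{t,i} \leq 1 - \epsilon_0$; Chernoff \eqref{eq:chernoff} then controls each single-coordinate tail by $(c_{\epsilon_0} p_i^{\epsilon_0})^t$ or $(c_{\epsilon_0}(1-p_i)^{\epsilon_0})^t$. The main obstacle is the sum-versus-product mismatch that the naive union bound produces: a sum of these tail terms can easily exceed the product appearing in the polynomial lower bound when the $p_i$ span many magnitudes. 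Resolving this requires a finer, face-aware decomposition of the event $\{f(\bar X_t) \geq 1/2\}$---grouping outcomes by which sub-face of $\bar F_{A_p, S_p, B_p}$ contains $\bar X_t$, and invoking polynomial-boundedness on \emph{every} zero-face of $f$ (not only on $\bar F_{A, S, B}$) so the relevant coordinate deviations are forced to be joint---after which the joint probability factorizes as a product that matches the polynomial lower bound. I expect this is precisely the new concentration argument advertised as the heart of the paper in Section \ref{subsec:fq_zero}; once it is in hand, taking $t_q$ large enough completes the proof.
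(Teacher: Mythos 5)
Your $f(q) > 0$ case is correct and coincides with the paper's argument in Section~\ref{subsec:fq_pos}: pick $r_q$ via continuity so $f(p) \geq f(q)/2$ on the ball, use Claim~\ref{claim:buffer} to bound $\P_p[f(\bar X_t) \geq 1/2]$ by $\P_p[\|\bar X_t - p\|_\infty > \delta]$, and apply Hoeffding; since $f(q)$ is a fixed positive number, exponential decay wins for $t \geq t_q$.

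For the $f(q)=0$ case you set the stage correctly. Your strengthening of Claim~\ref{claim:zero_open_face} to $f \equiv 0$ on the whole closure $\bar F_{A,S,B}$ is valid (and is essentially Claim~\ref{claim:sub_safe_distance0} from the subdomain section). Choosing $r_q$ so that nearby points lie only on faces $F_{A_p,S_p,B_p}$ with $A_p \subseteq A$, $B_p \subseteq B$ is also right, and your degenerate subcase (where $f \vert_{F_{A_p,S_p,B_p}}\equiv 0$, hence both sides of \eqref{eq:main_lemma} are zero) is handled correctly. But the claim is not proved. You correctly identify the obstruction -- a union bound over single-coordinate Chernoff tails produces a \emph{sum}, while the polynomial-boundedness lower bound on $f(p)$ is a \emph{product} -- and then you explicitly defer the resolution (``I expect this is precisely the new concentration argument... once it is in hand, taking $t_q$ large enough completes the proof''). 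That is the heart of the claim and it is absent.

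The missing piece is the decomposition of the \emph{event}, not just the point $p$, and your sketch of it has the polarities reversed. The paper partitions $\{f(\bar X_t) \geq 1/2\}$ by the set $T \subseteq A \cup B$ of coordinates with $|\bar X_{t,i} - p_i| > \delta/2$. For fixed $T$, look at the open face $F_{A\setminus T,\,S\cup T,\,B\setminus T}$ (free the deviating coordinates). If $f$ vanishes on that face, then any $\bar X_t$ whose only large deviations (in $A\cup B$) are in $T$ stays within $\ell_\infty$-distance $\delta$ of it, so by Claim~\ref{claim:safe_distance} $f(\bar X_t) < 1/2$: the event for that $T$ has probability zero. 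Otherwise, polynomial boundedness \emph{on that non-zero face} lower-bounds $f(p)$ by $C\cdot(p^{A\cap T}(1-p)^{B\cap T})^m$, and independence across coordinates plus Chernoff upper-bounds the event's probability by $C''\cdot(p^{A\cap T}(1-p)^{B\cap T})^{\delta t /2}$; with $t$ of order $m/\delta$ and $r_q$ small this is at most $2^{-(n+2)} f(p)$, and summing over $T$ finishes. In particular, polynomial-boundedness is invoked at the \emph{non-zero} faces $F_{A\setminus T, S\cup T, B\setminus T}$ to get the matching product lower bound on $f(p)$, while the zero-faces are used only to kill events -- not ``to force deviations to be joint,'' as your sketch suggests. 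Without supplying this $T$-decomposition, your argument does not close.
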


If Claim \ref{claim:ball} is true, then it provides us with an open cover $\cup_{q \in L} \B_\infty(q;r_q)$ of $L$. Hence we can use Lemma \ref{lemma:compactness} to argue it must have a finite subcover, i.e., there is a finite set of points $Q \subset L$, $\abs{Q} < \infty$ such that $L \subset \cup_{q \in Q} \B_\infty(q;r_q)$. Hence if we take $t_0 = \max_{q \in Q} t_q$, then equation \eqref{eq:main_lemma} in Lemma \ref{lemma:main_lemma} holds for all $t \geq t_0$ and $p \in L$.

\subsection{A safe distance from the boundary}

We will start by making two observations about the geometry of set $L$. 

\begin{claim}\label{claim:buffer}
There is some positive constant $\delta > 0$ such that:
$$f(q) < \frac{1}{2}, \forall q \in \B_\infty(p,\delta) \text{ and } p \in L.$$
\end{claim}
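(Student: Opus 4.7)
The plan is to prove Claim \ref{claim:buffer} by a standard uniform continuity argument, exploiting that the domain $[0,1]^n$ is compact.

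First I would observe that the function $f$ is continuous on the compact set $[0,1]^n$ (compactness by Lemma \ref{lemma:compactness}), hence by a classical fact $f$ is \emph{uniformly} continuous: there exists $\delta > 0$ such that $|f(q) - f(p)| < 1/8$ whenever $p,q \in [0,1]^n$ satisfy $\|q-p\|_\infty < \delta$. Note that this $\delta$ is a single global constant, independent of the particular point $p$ we choose.

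Then I would fix this $\delta$ and verify the claim directly. Given $p \in L$ (so $f(p) \leq 3/8$) and any $q \in \B_\infty(p;\delta) \cap [0,1]^n$, uniform continuity gives
\[
f(q) < f(p) + \tfrac{1}{8} \leq \tfrac{3}{8} + \tfrac{1}{8} = \tfrac{1}{2},
\]
which is exactly the inequality asserted in the claim.

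There is no real obstacle here; the only subtle point worth flagging in the writeup is that the $\ell_\infty$-ball $\B_\infty(p;\delta)$ can stick out of $[0,1]^n$, but $f$ is only defined on $[0,1]^n$, so the statement of the claim is to be understood on the intersection with the domain. With that understood, the proof is a one-line consequence of uniform continuity plus the defining inequality of $L$.
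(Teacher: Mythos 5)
Your proof is correct, but it invokes a different standard compactness fact than the paper does. The paper defines $H = \{p \in [0,1]^n : f(p) \geq 1/2\}$, observes that $H$ and $L$ are disjoint compact sets, and takes $\delta$ smaller than the (positive) $\ell_\infty$-distance between them. You instead invoke uniform continuity of $f$ on the compact cube to get a single $\delta$ with modulus $1/8$, which suffices since $L$ is defined with the threshold $3/8$. Both arguments exploit exactly the same $1/8$ gap between $3/8$ and $1/2$, and both are one-line consequences of compactness; the paper's version has the very minor advantage of not needing to name a modulus, while yours makes the role of the $3/8$-versus-$1/2$ buffer slightly more explicit. Your closing remark about $\B_\infty(p;\delta)$ possibly leaving $[0,1]^n$ is correct and harmless, since $f$ is only ever evaluated on points of $[0,1]^n$ (in the application, $\bar X_t$ always lies in the cube).
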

\begin{proof}
Define $H = \left\{ p \in [0,1]^n; f(p) \geq \frac{1}{2} \right\}$. Since $H$ and $L$ are disjoint compact sets there is a constant $\delta$ such that  $\norm{p-q}_\infty > \delta, \forall p \in H, q \in L$. 
\end{proof}

With that observation, for any $p \in L$ we will bound 
$\P_p\left[f(\bar X_t) \geq \frac{1}{2}\right] \leq \P_p\left[ \norm{\bar X_t - p}_\infty > \delta \right]$,
which allows us to apply concentration bounds. This will give us a good enough argument whenever $f(p) > 0$. For the case $f(p) = 0$, however, we need a more detailed understanding of how $f$ behaves close to the boundary. For that, we will use the following two claims:

\begin{claim}\label{claim:zero_open_face}
If $p \in F_{A,S,B}$ and $f(p) = 0$ then $f\vert_{F_{A,S,B}} \equiv 0$.
\end{claim}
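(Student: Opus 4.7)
The plan is to prove Claim \ref{claim:zero_open_face} by contraposition, directly invoking the definition of polynomially bounded. Suppose $f \vert_{F_{A,S,B}} \not\equiv 0$. Then by Definition \ref{def:poly_bounded} there exist an integer $m$ and a constant $c > 0$ such that
\[
f(q) \geq c \left( (1-q)^A \cdot q^S (1-q)^S \cdot q^B \right)^m, \quad \forall q \in [0,1]^n.
\]

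Next I would evaluate this lower bound at the specific point $q = p \in F_{A,S,B}$ given in the statement. By the definition of $F_{A,S,B}$, we have $p_i = 0$ for $i \in A$, so $(1-p)^A = \prod_{i\in A}(1-p_i) = 1$; similarly $p_i = 1$ for $i \in B$, so $p^B = \prod_{i \in B}p_i = 1$; and $p_i \in (0,1)$ for every $i \in S$, so $p^S(1-p)^S = \prod_{i \in S}p_i(1-p_i) > 0$. Therefore the right-hand side of the polynomial-boundedness bound is strictly positive at $p$, giving $f(p) > 0$, contradicting the hypothesis $f(p) = 0$.

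There is essentially no obstacle here: the claim is an immediate book-keeping consequence of how the exponents in the defining polynomial $\left((1-p)^A \cdot p^S(1-p)^S \cdot p^B\right)^m$ were chosen precisely to vanish only on the closure of faces different from $F_{A,S,B}$ (more precisely, only where some coordinate in $S$ reaches $0$ or $1$, or some coordinate in $A$ reaches $1$, or some coordinate in $B$ reaches $0$). Any point of the open face $F_{A,S,B}$ itself makes this polynomial strictly positive, so the condition forces a strictly positive lower bound on $f$ throughout $F_{A,S,B}$ as soon as $f$ is nonzero somewhere on that face. Taking contrapositives yields the claim.
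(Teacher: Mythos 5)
Your proof is correct and is essentially identical to the paper's: both argue by contraposition, applying the polynomial-boundedness lower bound and observing that at any $p \in F_{A,S,B}$ the factors $(1-p)^A$, $p^B$ equal $1$ and $p^S(1-p)^S > 0$, forcing $f(p) > 0$. Your write-up is just a more detailed version of the paper's one-line argument.
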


\begin{proof}
If $f\vert_{F_{A,S,B}} \not\equiv 0$ then $f(q) \geq c \cdot ((1-q)^A q^S (1-q)^S q^B)^m$ which contradicts the fact that $f(p) = 0$ since $(1-p)^A = 1$, $p^B = 1$ and $p^S (1-p)^S > 0$.
\end{proof}

The second claim establishes that if a function is zero on an open face, then there exists a safe distance $\delta$ such that any point $q$ within distance $\delta$ of this open face satisfies $f(q) < 1/2$.

\begin{claim}\label{claim:safe_distance}
If $f\vert_{F_{A,S,B}} \equiv 0$ then $f(q) < 1/2$ for all $q \in [0,1]^n \cap B_\infty(F_{A,S,B}, \delta)$ for the constant $\delta$ in Claim \ref{claim:buffer}.
\end{claim}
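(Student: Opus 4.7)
The plan is to observe that the claim is essentially an immediate consequence of Claim \ref{claim:buffer} once we note that the open face $F_{A,S,B}$ is contained in $L$ under the hypothesis.

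More precisely, I would first argue that $F_{A,S,B} \subseteq L$: by assumption $f \equiv 0$ on $F_{A,S,B}$, and $0 \leq 3/8$, so every point of $F_{A,S,B}$ lies in $L = \{p : f(p) \leq 3/8\}$. Next, I would unfold the definition $\B_\infty(F_{A,S,B}, \delta) = \bigcup_{p \in F_{A,S,B}} \B_\infty(p, \delta)$, so any $q \in [0,1]^n \cap \B_\infty(F_{A,S,B}, \delta)$ lies within $\ell_\infty$-distance $\delta$ of some $p \in F_{A,S,B} \subseteq L$. Applying Claim \ref{claim:buffer} to this $p$ and $q$ gives $f(q) < 1/2$, completing the proof.

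There is no real obstacle here: the work has already been done in setting up $\delta$ in Claim \ref{claim:buffer} (which used compactness of $H$ and $L$ together with continuity of $f$ to produce a uniform separation). The only subtle point worth stating explicitly is that $\delta$ does \emph{not} depend on the particular open face $F_{A,S,B}$ — it is the single constant produced in Claim \ref{claim:buffer} — which is what makes the statement useful later (one can quantify uniformly over the finitely many open faces on which $f$ vanishes when building the cover for Claim \ref{claim:ball}).
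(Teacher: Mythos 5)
Your proof is correct and takes exactly the same route as the paper: note $F_{A,S,B}\subseteq L$ (since $f\equiv 0 \le 3/8$ there), unfold the definition of $\B_\infty(F_{A,S,B},\delta)$, and apply Claim \ref{claim:buffer}. The paper simply compresses this into one line; your version also helpfully makes explicit that $\delta$ is uniform over faces, which is used later.
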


\begin{proof}
Follows directly from Claim \ref{claim:buffer} and the fact that $F_{A,S,B} \subset L$.
\end{proof}

\subsection{Proof of Claim \ref{claim:ball} for $f(q) > 0$}\label{subsec:fq_pos}

By continuity, there is a radius $r$ small enough such that:
$$2 f(q) \geq f(p) \geq \frac{1}{2} f(q), \forall p \in L \cap \B_\infty(q,r)$$
By Claim \ref{claim:buffer} and the Hoeffding bound (equation \eqref{eq:hoeffding}) we know that:
$$
\P_p\left[ f(\bar X_t) \geq \frac{1}{2}\right] \leq
\P_p\left[\norm{\bar X_{t} - p}_\infty > \delta\right] \leq 2n \exp(-2\delta^2 t) \leq \frac{1}{4} f(q)
$$
for $t \geq t_q := \lceil -\frac{1}{2 \delta^2}\log(\frac{1}{8n} f(q))\rceil$ and $p \in L$.
Therefore, we have:
$$f(p) - \frac{1}{4} \P_p\left[f(\bar X_t) \geq \frac{1}{2} \right] \geq \frac{1}{2} f(q) - \frac{1}{4} f(q) = \frac{1}{4} f(q) \geq \frac{1}{8} f(p)$$
therefore establishing equation \eqref{eq:main_lemma} for $p \in L \cap \B_\infty(q,r) $ and $t \geq t_q$.

\subsection{Proof of Claim \ref{claim:ball} for $f(q) = 0$}\label{subsec:fq_zero}

Let $F_{A,S,B}$ be the open face containing $q$. By Claim \ref{claim:zero_open_face} we know that $f\vert_{F_{A,S,B}} \equiv 0$.  We start by taking a small radius $r$ such that $r < \delta/2$ and $q_i - r > r > 0$ and $1-q_i - r > r > 0$ for all $i \in S$. Through the course of the proof, we may decrease $r$ further if necessary.

Our goal is to bound the probability $\P_p[f(\bar X_t) \geq 1/2]$ for $p \in \B_\infty(q,r) \cap L$. We will decompose this probability by looking at the coordinates (specifically, the coordinates in $A \cup B$) on which $\bar{X}_t$ significantly deviates from $p$:
$$\P_p\left[f(\bar X_t) \geq \frac{1}{2}\right] = \sum_{T \subseteq A \cup B} \P_p\left[f(\bar X_t) \geq \frac{1}{2} \text{ and } T = \left\{i \in A \cup B; \abs{\bar X_{t,i} - p_i} > \frac{\delta}{2}\right\}\right]$$
We will show that for each $T \subseteq A \cup B$ there is $r$ small enough such that:
\begin{equation}\label{eq:bound_T}
\P_p\left[f(\bar X_t) \geq \frac{1}{2} \text{ and } T = \left\{i \in A \cup B; \abs{\bar X_{t,i} - p_i} > \frac{\delta}{2}\right\}\right] \leq \frac{1}{2^{n+2}} f(p)
\end{equation}
If we can establish this, then it would imply that $\P_p\left[f(\bar X_t) \geq \frac{1}{2}\right] \leq \frac{1}{4} f(p)$ which directly implies equation \eqref{eq:main_lemma}.
We will consider two cases depending the value of $f$ in the open face $F_{A \setminus T, S \cup T, B \setminus T}$.\\

\noindent \emph{Case 1:} $f_{F_{A\setminus T, S\cup T, B \setminus T}} \equiv 0$. In this case, by Claim \ref{claim:safe_distance} we have that  all points $p$ at a $\ell_\infty$ distance at most $\delta$ from $F_{A\setminus T, S\cup T, B \setminus T}$ have $f(p)<1/2$. Now observe that all the coordinates in which $p$ differs from $\bar X_t$ by at least $\delta/2$ belong to $S \cup T$. Hence, the $\ell_\infty$ distance between $\bar X_t$ and $F_{A\setminus T, S\cup T, B \setminus T}$ is at most $\delta/2$, and hence $\bar X_t \in L$, which in turn means that $f(\bar X_t) < \frac{1}{2}$. So the left hand side of equation \eqref{eq:bound_T} is zero.\\

\noindent \emph{Case 2:} $f_{F_{A\setminus T, S\cup T, B \setminus T}} \not\equiv 0$. In this case, by the fact that $f$ is polynomially bounded we have:
$$f(p) \geq c \cdot ((1-p)^{A \setminus T} \cdot p^{S \cup T} (1-p)^{S \cup T} \cdot p^{B\setminus T})^m $$
For $p \in \B_\infty(q,r)$ we have $1-p_i > r$ for $i \in A \cup S$ and $p_i > r$ for $i \in S \cup B$ by the definition of $r$. Since $r$ is a constant we can write:
$$f(p) \geq C \cdot (p^{T\cap A} \cdot(1-p)^{T \cap B})^m$$
Now, note that the left hand side of equation \eqref{eq:bound_T} is at most:
$$\P_p\left[ \bar X_{t,i} - p_i > \frac{\delta}{2}, \forall i \in T \cap A \text{ and }  p_i - \bar  X_{t,i}> \frac{\delta}{2}, \forall i \in T \cap B \right]$$
since for $i \in A$ we have $p_i - \bar X_{t,i} \leq p_i < r < \delta/2$ and for $i \in B$ we have $\bar X_{t,i} - p_i \leq 1-p_i \leq r < \delta/2$.
By the Chernoff bound (equation \eqref{eq:chernoff0}) this can be bounded by:
$$\prod_{i \in A\cap T}(C' p_i^{\delta/2})^t \cdot \prod_{i \in B\cap T}(C' (1-p_i)^{\delta/2})^t = (C')^{\abs{T} t} \cdot (p^{A \cap T} (1-p)^{B \cap T})^{\delta t/2}$$
for some constant $C'$. Choose $t = \lceil 4m/\delta\rceil$. Then for a constant $C''$ we can bound the expression above by:
$$C''\cdot(p^{A \cap T} (1-p)^{B \cap T})^{2m} $$
We know that $\abs{T} \geq 1$ since $f\vert_{A,S,B} \equiv 0$, so $p^{A \cap T} (1-p)^{B \cap T} \leq r$. It follows that for sufficiency small values of $r$:
$$r \leq \left(\frac{C}{2^n C''}\right)^{1/m}$$
and therefore equation \eqref{eq:bound_T} holds for all $p \in L \cap \B_\infty(q,r)$.

\section{Factories on Subdomains of the Hypercube}\label{sec:subdomain}

Next we characterize which functions defined on a compact subdomain of the hypercube are implementable. Given a compact set $K \subseteq [0,1]^n$ and a continuous function $f:K \rightarrow [0,1]$, our first instinct is to check whether we can extend $f$ to the entire hypercube satisfying the properties in Theorem \ref{thm:main_thm}. The problem with this approach is that it is possible to construct functions that are implementable on a subset $K$ but can't be extended to an implementable function on the entire hypercube. Here is an example:

\begin{example}
Let $K$ be the convex hull of $\{(\frac{1}{2},0), (0, \frac{1}{2})\}$ and define $f:K \rightarrow [0,1]$ as $f(p) = p_1 / (p_1+p_2)$. We first observe that $f$ is implementable by the following procedure: choose a coin $i \in \{1,2\}$ uniformly at random and flip it. If the coin comes up $1$, then output $1$ if $i=1$ and $0$ if $i=2$. If the flipped coin comes up $0$ we retry. Each time we do it, we output $1$ with probability $\frac{p_1}{2}$, we output $0$ with probability $\frac{p_2}{2}$ and retry with probability $1-\frac{p_1+p_2}{2}$. The total probability of outputting one is therefore $\sum_{k=0}^\infty \left( 1-\frac{p_1+p_2}{2} \right)^k \frac{p_1}{2} = \frac{p_1}{p_1 + p_2}$.

However, $f$ doesn't have a continuous and polynomially bounded extension to the hypercube. To see that observe that since $f(\frac{1}{2},0) = 1$, then it must be $1$ on the open face $(0,1)\times \{0\}$ by the fact it is polynomially bounded. Similarly, since $f(0,\frac{1}{2}) = 0$ then it must be $0$ on the open face $\{0\} \times (0,1)$. Such a function can't be continuous on $[0,1]^2$ since there are sequences approaching $(0,0)$ with different limits.
\end{example}

Instead of trying to extend the function we will adapt the proof in the previous section to deal with any domain $K$. First we say that a function $f:K \rightarrow [0,1]$ is polynomially bounded if there is an integer $m$ and a real constant $c>0$ such that for every open face $F_{A,S,B}$ of the hypercube, it holds that:
$$\exists q \in K \cap F_{A,S,B}, f(q) > 0 \Rightarrow f(p) \geq c  ((1-p)^A \cdot p^S \cdot (1-p)^S \cdot p^B)^m, \forall p \in K$$

With this extended definition we state the following:

\begin{theorem}[Extension of Theorem \ref{thm:main_thm} to subdomains]\label{thm:sub_main_thm} For a compact $K \subseteq [0,1]^n$, a function $f:K \rightarrow [0,1]$ is implementable by a Bernoulli factory if and only if it is continuous and $f$ and $1-f$ are polynomially bounded.
\end{theorem}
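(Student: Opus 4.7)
The plan mirrors the structure of Section 4 adapted to $p \in K$. Necessity is a direct adaptation of Lemmas \ref{lemma:necessary1}--\ref{lemma:necessary3}: the Bernstein-monomial tree analysis already produces a lower bound valid on all of $[0,1]^n$ (and hence on $K$), and the continuity argument is unchanged. So the technical heart is a subdomain analog of Lemma \ref{lemma:main_lemma}; once established, Lemma \ref{lemma:recursive_step} and the geometric-series argument proving Theorem \ref{thm:main_thm} carry over verbatim with $p$ ranging over $K$.

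To run the algorithm, I would introduce a face-respecting projection $\pi: [0,1]^n \to K \cup \{\bot\}$: let $F(x) = F_{A(x), S(x), B(x)}$ be the open face of the hypercube containing $x$, and define $\pi(x)$ to be a measurable selection of $\ell_\infty$-nearest point in $K \cap \bar F(x)$ if this intersection is non-empty, with $\pi(x) = \bot$ otherwise. The factory implementing $g$ flips each coin $t$ times to form $\bar X_t$, retries with a fresh batch whenever $\pi(\bar X_t) = \bot$, and outputs $\mathbf{1}[f(\pi(\bar X_t)) \geq 1/2]$. For any $p \in K$, the probability $\P_p[\bar X_t \in F(p)]$ is strictly positive, and $\bar X_t \in F(p)$ implies $\pi(\bar X_t) \neq \bot$; hence the factory terminates almost surely, which suffices in the recursive step of Lemma \ref{lemma:recursive_step}.

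The proof of the subdomain lemma re-runs the argument of Section 5 with $L, H$ replaced by $L_K = \{p \in K : f(p) \leq 3/8\}$ and $H_K = \{p \in K : f(p) \geq 1/2\}$. Compactness of $K$ supplies the separation $\delta > 0$, the finite subcover for Claim \ref{claim:ball}, and the analogs of Claims \ref{claim:buffer}--\ref{claim:safe_distance}. A crucial strengthening is that if $p \in K$ has $f(p) = 0$, then $f \equiv 0$ on the entire closed-face intersection $K \cap \bar F(p)$: evaluating the polynomial lower bound of any face $F' \subseteq \bar F(p)$ with $f \not\equiv 0$ on $K \cap F'$ at the point $p$ produces a strictly positive monomial, forcing $f(p) > 0$ and yielding a contradiction. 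Since $\bar X_t \in \bar F(p)$ almost surely, this implies $\pi(\bar X_t) \in K \cap \bar F(p)$ whenever defined, so $f(\pi(\bar X_t)) = 0$ almost surely, giving $g(p) = 0$; the symmetric argument applied to $1 - f$ (where the looping is essential for discarding $\pi = \bot$ samples) yields $g(p) = 1$ when $f(p) = 1$.

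The main obstacle is adapting the combinatorial case split in Subsection~\ref{subsec:fq_zero} when $K \cap F_{A \setminus T, S \cup T, B \setminus T} = \emptyset$, so that the polynomial lower bound attached to this face is vacuous. The resolution is to substitute the polynomial bound associated with the face $F(p)$ of $p$ itself, which always intersects $K$ (at least at $p$) and on which $f$ is non-vanishing when $f(p) > 0$. Having $\P_p[T = \cdots] > 0$ forces $T \cap A \subseteq S(p)$ and $T \cap B \subseteq S(p)$ (otherwise the corresponding $\bar X_{t,i}$ would equal $p_i \in \{0,1\}$ deterministically), so all deviation-relevant factors $p_i$ and $1 - p_i$ appearing in the Chernoff estimate are already present in the $p^{S(p)}(1-p)^{S(p)}$ term of the $F(p)$-monomial. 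For $t$ large and $r$ small enough, the resulting Chernoff bound on $\P_p[T = \cdots]$ is dominated by $f(p)/2^{n+2}$ exactly as in the proof in Subsection~\ref{subsec:fq_zero}.
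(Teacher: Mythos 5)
Your high-level structure (necessity unchanged, recursive decomposition via a subdomain analog of Lemma~\ref{lemma:main_lemma}, compactness argument for Claim~\ref{claim:ball}) matches the paper, and your observation that $f(p)=0 \Rightarrow f\vert_{K\cap\bar F(p)}\equiv 0$ is exactly the paper's Claim~\ref{claim:sub_safe_distance0}. However, the projection mechanism you propose is genuinely different from the paper's, and it has two concrete gaps.

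First, your face-respecting projection $\pi$ to $K \cap \bar F(\bar X_t)$ carries no bound on the displacement $\norm{\pi(\bar X_t)-\bar X_t}_\infty$. The paper explicitly flags this danger (``a large deviation in one coordinate \ldots can cause a large deviation for possibly many other coordinates in the projection'') and solves it by conditioning on $\bar X_t \in \B_\infty(K,\epsilon)$, which forces $\norm{Z_{t,\epsilon}-Y_{t,\epsilon}}_\infty \leq \epsilon$. Your $\pi$ discards samples only when $K\cap\bar F(\bar X_t)=\emptyset$, not when the nearest point is far. Consequently, in the $T$-decomposition even the $T=\emptyset$ event (which has probability near $1$) can produce $f(\pi(\bar X_t))\geq 1/2$: if $\bar X_{t,i}$ hits $0$ for some $i \in S(p)$ where $p_i$ is small, then $\pi(\bar X_t)$ is forced into the subface $\{x_i=0\}$, where the nearest point of $K$ can differ from $p$ by $\Theta(1)$ in other coordinates. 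Your ``Case 1''-type argument shows $g(p)=0$ when $f(p)=0$, but does not control $f(\pi(\bar X_t))$ for nearby $p$ with $f(p)>0$, which is the bulk of what Claim~\ref{claim:ball} requires.

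Second, substituting the $F(p)$-monomial when $K \cap F_{A\setminus T,\,S\cup T,\,B\setminus T}=\emptyset$ yields a lower bound on $f(p)$ that is too weak. The $F(p)$-monomial contains a small factor $p_i(1-p_i)$ for \emph{every} $i \in S(p) \cap (A(q)\cup B(q))$, not just for $i\in T$, whereas the Chernoff estimate only produces small factors for $i\in T$. Concretely, take $q=(0,0)$, $p=(\eps,\eps)$, and $T=\{1\}$: the Chernoff bound scales like $\eps^{2m}$, the $F(p)$-monomial like $\eps^{2m}$ as well, and the required inequality $\text{Chernoff} \leq \frac{c}{2^{n+2}}\cdot(\text{monomial})$ cannot hold for $c<1$. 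The paper avoids this by descending to an open subface $F_{A',S',B'}$ of $\bar F_{A\setminus T,\,S\cup T,\,B\setminus T}$ that meets $K$ and on which $f\not\equiv 0$; the inclusions $A\setminus T\subseteq A'$, $B\setminus T\subseteq B'$ then guarantee the monomial contains small factors only for coordinates in $T$, matching the Chernoff bound.
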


The necessary conditions follow from the exact same arguments as in Lemmas  \ref{lemma:necessary1}, \ref{lemma:necessary2} and \ref{lemma:necessary3}. To prove it is sufficient, we need to deal with the following difficulty in extending the proof: the sampled average $\bar X_t$ may not be in the domain $K$ so $f(\bar X_t)$ is not well-defined. Therefore we can't use Lemma \ref{lemma:main_lemma} directly.

To address this, we need two new ideas. The first new idea is to project the sampled point $\bar X_t$ to the domain $K$. One difficulty in simply projecting the sampled average is that if $Y$ is the projection of $\bar X_t$ to $K$ then a large deviation in one coordinate (say $\abs{\bar X_{t,i} - p_i}$ is large) can cause a large deviation for possibly many other coordinates $\abs{Y_j - p_j}$ in the projection. This makes it hard to apply the argument in Section \ref{subsec:fq_zero} since we need to reason about large deviations of subsets of coordinates.

The second new idea seeks to address this point: we will only project if the sampled average $\bar X_t$ is close enough to the domain $K$. If not, we will resample $\bar X_t$. By doing this, we can guarantee that the coordinates will not move too much.

\subsection{Project if close enough}

The previous discussion motivates the definition a new random variable $Z_{t,\epsilon}$. First, we will define the projection operator $\Pi_K:[0,1]^n \rightarrow K$ which is a function such that:
$$\norm{\Pi_K(p) - p}_\infty \leq \norm{q - p}_\infty, \forall p \in [0,1]^n, q \in K$$
Note that there may be many choices for $\Pi_K$, in which case we may choose arbitrarily. Also observe that $\Pi_K$ is not necessarily continuous. 

Now, define $Y_{t,\epsilon}$ as a random variable taking values in $[0,1]^n$ distributed according to the  conditional distribution of $\bar X_t$ given that $\bar X_t \in \B_\infty(K, \epsilon)$:
$$\P_p [Y_{t,\epsilon} \in A] = \P_p[ \bar X_t \in A \mid \bar X_t \in \B_\infty(K, \epsilon)], \quad \forall \text{ measurable } A \subseteq [0,1]^n$$
This variable can be sampled as follows: first sample $\bar X_t$. If $\bar X_t \in \B_\infty(K, \epsilon)$ then set $Y_{t,\epsilon} = \bar X_t$. If not, resample $\bar X_t$ and try again until $\bar X_t \in \B_\infty(K, \epsilon)$.  Now, define:
$$Z_{t,\epsilon} = \Pi_K(Y_{t,\epsilon})$$

\begin{lemma}\label{lemma:prob_Y}
If $t \geq \log(8n)/(2\epsilon^2)$ then for any $p \in K$ and any measurable set $A$ it holds that:
$$\P_p[Y_{t,\epsilon} \in A] \leq 2\cdot \P_p[\bar X_{t,\epsilon} \in A] $$
\end{lemma}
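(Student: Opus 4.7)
The plan is to unpack the definition of $Y_{t,\epsilon}$ as a conditional distribution and reduce the claim to a lower bound on the normalizing probability $\P_p[\bar X_t \in \B_\infty(K,\epsilon)]$. Specifically, for any measurable $A$ we have
\[
\P_p[Y_{t,\epsilon} \in A] \;=\; \frac{\P_p[\bar X_t \in A \cap \B_\infty(K,\epsilon)]}{\P_p[\bar X_t \in \B_\infty(K,\epsilon)]} \;\leq\; \frac{\P_p[\bar X_t \in A]}{\P_p[\bar X_t \in \B_\infty(K,\epsilon)]},
\]
so it suffices to prove that $\P_p[\bar X_t \in \B_\infty(K,\epsilon)] \geq 1/2$ under the hypothesis on $t$. (I am reading ``$\bar X_{t,\epsilon}$'' in the displayed inequality of the lemma as ``$\bar X_t$'', since $\bar X_{t,\epsilon}$ has not been defined.)

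Next I would use that $p \in K$ itself, so the inclusion $\bar X_t \in \B_\infty(K,\epsilon)$ is implied by the event $\|\bar X_t - p\|_\infty < \epsilon$. Thus
\[
\P_p[\bar X_t \notin \B_\infty(K,\epsilon)] \;\leq\; \P_p[\|\bar X_t - p\|_\infty \geq \epsilon] \;\leq\; \sum_{i=1}^n \P_p\bigl[|\bar X_{t,i} - p_i| \geq \epsilon\bigr],
\]
by the union bound.

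Then I would apply the Hoeffding bound \eqref{eq:hoeffding} coordinatewise, yielding
\[
\P_p[\bar X_t \notin \B_\infty(K,\epsilon)] \;\leq\; 2n \exp(-2\epsilon^2 t).
\]
Plugging in $t \geq \log(8n)/(2\epsilon^2)$ gives $\exp(-2\epsilon^2 t) \leq 1/(8n)$, so the right-hand side is at most $1/4 < 1/2$. Therefore $\P_p[\bar X_t \in \B_\infty(K,\epsilon)] \geq 1/2$, and combining with the first display above yields the claimed factor of $2$.

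The proof is essentially mechanical once one notes that $p \in K$ makes the ``success'' event $\{\bar X_t \in \B_\infty(K,\epsilon)\}$ a superset of a Hoeffding-controllable concentration event; there is no real obstacle. The only subtle choice is taking the constant inside the logarithm to be $8n$ rather than $4n$, which gives slack so the denominator is bounded below by $1/2$ (in fact by $3/4$) rather than being an exact $1/2$, avoiding any boundary issues from the strict/non-strict inequality in the definition of $\B_\infty$.
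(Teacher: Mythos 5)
Your proof is correct and matches the paper's argument essentially line for line: same conditional-probability decomposition of $\P_p[Y_{t,\epsilon}\in A]$, same bound on the numerator, same observation that $p\in K$ gives $\B_\infty(p,\epsilon)\subseteq\B_\infty(K,\epsilon)$, and same union-bound-plus-Hoeffding estimate for the denominator. Your reading of the typo $\bar X_{t,\epsilon}$ as $\bar X_t$ is also the intended one, and your remark that the constant $8n$ gives slack (bound $1/4$ rather than $1/2$) is a harmless refinement of the same calculation.
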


\begin{proof}
By the definition of $Y_{t,\epsilon}$ we can write:
$$\begin{aligned}
\P_p\left[Y_{t,\epsilon} \in A \right]   = \P_p\left[ \bar X_{t,\epsilon} \in A \middle| \bar X_{t} \in \B_\infty(K, \epsilon)\right]  = \frac{\P_p\left[\bar X_{t,\epsilon} \in A\text{ and } \bar X_{t} \in \B_\infty(K, \epsilon)\right]}{\P_p\left[\bar X_{t} \in \B_\infty(K, \epsilon)\right]} 
\end{aligned}$$
The numerator of the last expression is clearly bounded above by $\P_p[\bar X_{t,\epsilon} \in A]$. For the denominator, notice that for $p \in K$ we have $\B_\infty(p,\epsilon) \subseteq \B_\infty(K,\epsilon) $ hence:
$$\P_p\left[\bar X_{t} \in \B_\infty(K, \epsilon)\right] \geq \P_p\left[\bar X_{t} \in \B_\infty(p, \epsilon)\right] = 1-\P_p\left[\norm{\bar X_{t} -p}_\infty \geq \epsilon \right]  $$
By the Hoeffding bound, we have:
$P_p\left[\norm{\bar X_{t} -p}_\infty \geq \epsilon \right] \leq 2n \exp(-2\epsilon^2 t) \leq \frac{1}{2}$
for $t \geq \log(8n)/(2\epsilon^2)$. Putting it all together, we obtain the result in the statement.
\end{proof}

\subsection{Extension of Lemma \ref{lemma:main_lemma} to subdomains}

\begin{lemma}\label{lemma:sub_main_lemma}
Let $f:K \rightarrow [0,1]$ be a continuous and polynomially bounded function.
Then there is some $\epsilon>0$ and an integer $t_0$ such that for $t \geq t_0$ it holds that:
\begin{equation}\label{eq:sub_main_lemma}
f(p) - \frac{1}{4} \cdot \P_p \left[ f(Z_{t,\epsilon}) \geq \frac{1}{2} \right] \geq \frac{1}{8} f(p), \forall p \in [0,1]^n
\end{equation}
\end{lemma}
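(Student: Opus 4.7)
The proof follows the template of Lemma~\ref{lemma:main_lemma} in Section~\ref{subsec:sufficient}, substituting $Z_{t,\epsilon}$ for $\bar X_t$ throughout. Two new ingredients handle the replacement: the projection identity $\|Z_{t,\epsilon} - Y_{t,\epsilon}\|_\infty \leq \epsilon$ (built into the definition of $Z_{t,\epsilon}$), and Lemma~\ref{lemma:prob_Y}, which transfers probability bounds from $Y_{t,\epsilon}$ back to $\bar X_t$ at the cost of a factor of $2$. Define the low region $L = \{p \in K;\, f(p) \leq 3/8\}$; inequality \eqref{eq:sub_main_lemma} is immediate for $p \in K \setminus L$, so we focus on $p \in L$. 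As in Section~4 we establish a local claim (for each $q \in L$ there exist $r_q, t_q, \epsilon_q > 0$ such that \eqref{eq:sub_main_lemma} holds on $L \cap \B_\infty(q;r_q)$ for $t \geq t_q$ and $\epsilon \leq \epsilon_q$) and apply Heine--Borel to the compact set $L$ to extract uniform constants.

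\textbf{Safe-distance estimates.} The analog of Claim~\ref{claim:buffer} yields $\delta_1 := d_\infty(H, L) > 0$ for $H = \{p \in K;\, f(p) \geq 1/2\}$. In addition, for each face $F'$ of the hypercube on which $f \equiv 0$ on $K \cap \bar F'$, we have $H \cap \bar F' \subseteq H \cap L = \emptyset$, and compactness yields $d_\infty(H, \bar F') > 0$; minimizing over the finitely many such ``vanishing'' faces produces $\delta_2 > 0$. Set $\delta = \min(\delta_1, \delta_2)$ and fix $\epsilon < \delta/4$, so that any $z \in K$ within $\delta/2 + \epsilon < \delta$ of $L$ or of a vanishing $\bar F'$ satisfies $f(z) < 1/2$.

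\textbf{Case $f(q) > 0$.} By continuity choose $r$ with $f(p) \geq f(q)/2$ on $K \cap \B_\infty(q;r)$. If $f(Z_{t,\epsilon}) \geq 1/2$ then $Z_{t,\epsilon} \in H$, so $\|Z_{t,\epsilon} - p\|_\infty \geq \delta_1$ and hence $\|Y_{t,\epsilon} - p\|_\infty \geq \delta_1 - \epsilon \geq \delta/2$. Applying Lemma~\ref{lemma:prob_Y} and the Hoeffding bound \eqref{eq:hoeffding}, this probability is at most $4n\exp(-\delta^2 t/2)$, which can be driven below $f(q)/4$ by taking $t_q$ large; this yields \eqref{eq:sub_main_lemma} on $L \cap \B_\infty(q;r)$.

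\textbf{Case $f(q) = 0$.} Let $q \in F_{A,S,B}$. A key reduction: for every open face $F'' = F_{A'',S'',B''}$ with $A \subseteq A''$ and $B \subseteq B''$ (i.e., $F'' \subseteq \bar F_{A,S,B}$) we have $f|_{K \cap F''} \equiv 0$. Indeed, otherwise the polynomial lower bound evaluated at $p = q$ gives $f(q) \geq c((1-q)^{A''} q^{S''} (1-q)^{S''} q^{B''})^m > 0$, since $A \subseteq A''$ forces $(1-q_i) = 1$ for $i \in A$ (and the extra indices in $A'' \setminus A \subseteq S$ have $(1-q_i) \in (0,1)$), symmetrically for $B$, and $S'' \subseteq S$ contributes positive factors, contradicting $f(q) = 0$. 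Following Section~\ref{subsec:fq_zero}, decompose $\P_p[f(Z_{t,\epsilon}) \geq 1/2]$ by $\tilde E_T = \{T = \{i \in A \cup B;\, |Y_{t,\epsilon,i} - p_i| > \delta/2\}\}$ for $T \subseteq A \cup B$, and let $F' = F_{A \setminus T, S \cup T, B \setminus T}$. If $f \equiv 0$ on $K \cap \bar F'$ (\emph{Case A}; always true when $T = \emptyset$ by the reduction), then on $\tilde E_T$, $Y_{t,\epsilon}$ lies within $\delta/2$ of $\bar F'$, so $Z_{t,\epsilon}$ lies within $\delta/2 + \epsilon < \delta_2$ of $\bar F'$, forcing $f(Z_{t,\epsilon}) < 1/2$ and making the sub-probability zero. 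Otherwise (\emph{Case B}), some $F'' \subseteq \bar F'$ has $f|_{K \cap F''} \not\equiv 0$; absorbing $O(1)$ factors for coordinates in $A \setminus T$, $B \setminus T$, and $S$ gives $f(p) \geq C'(p^{U_A}(1-p)^{U_B})^m$ with $U_A = (T \cap A) \setminus A''$ and $U_B = (T \cap B) \setminus B''$, and the reduction forces $U_A \cup U_B \neq \emptyset$ (else $A \subseteq A''$ and $B \subseteq B''$, contradiction). Chernoff \eqref{eq:chernoff} combined with Lemma~\ref{lemma:prob_Y} bounds the sub-probability by $C_1(p^{T \cap A}(1-p)^{T \cap B})^{\delta t/2} \leq C_1(p^{U_A}(1-p)^{U_B})^{2m}$ for $t \geq 4m/\delta$; taking $r$ small so that $(p^{U_A}(1-p)^{U_B})^m \leq r^m \leq C'/(2^{n+2} C_1)$ yields the needed bound $\leq f(p)/2^{n+2}$. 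The \emph{main obstacle} is precisely the Case A analysis in the subdomain setting: since $K \cap \bar F'$ can be strictly smaller than $\bar F'$ (even empty), one cannot argue directly that $Z_{t,\epsilon}$ is close to $K \cap \bar F'$; the resolution is to invoke $d_\infty(H, \bar F') > 0$ directly, exploiting the emptiness of $H \cap \bar F'$ rather than proximity within $K$.
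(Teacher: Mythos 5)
Your proof follows essentially the same route as the paper's: the same $L$/compactness reduction, the same use of Lemma~\ref{lemma:prob_Y} to pass from $Y_{t,\epsilon}$ back to $\bar X_t$, the analogues of Claims~\ref{claim:sub_safe_distance0}--\ref{claim:sub_safe_distance} (your disjoint-compact-sets phrasing is an equivalent rephrasing of the paper's sequence argument), and the same case split over $T \subseteq A \cup B$. The only cosmetic deviations are that in Case~A you dropped the extra $+r$ in the distance bound (harmless once one also imposes $r < \delta/4$, as the paper does), and in Case~B you bookkeep the deficit sets $U_A, U_B$ directly, whereas the paper first dominates the Bernstein monomial of $F_{A'',S'',B''}$ by that of $F_{A\setminus T,\, S\cup T,\, B\setminus T}$ and then reuses the Section~\ref{subsec:fq_zero} argument verbatim --- both land on the same estimate.
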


With this lemma, the proof of Theorem \ref{thm:sub_main_thm} follows from exact the same arguments used in Section \ref{subsec:sufficient} to prove Theorem \ref{thm:main_thm}. The only new thing to note is that for any fixed $t$ and $\epsilon$ the function $g_{t,\epsilon}(p) =  \P_p \left[ f(Z_{t,\epsilon}) \geq \frac{1}{2} \right]$ can be implemented by a Bernoulli factory since $Z_{t,\epsilon}$ can be sampled with only sample access to the $p_i$-coins.

\subsection{Proof of Lemma \ref{lemma:sub_main_lemma}}

We are now left to prove Lemma \ref{lemma:sub_main_lemma}, for which we will need a slight modification in the arguments. As before we can define:
$$L = \left\{p \in K; f(p) \leq \frac{3}{8} \right\}$$
For $p \notin L$, the statement of the lemma is once again trivial. For $p \in L$ we will follow the strategy in Claim \ref{claim:ball}. First, observe that Claim \ref{claim:buffer} still holds since $L$ and $\left\{p \in K; f(p) \geq \frac{1}{2} \right\}$ are disjoint compact sets. Next we strengthen Claims \ref{claim:zero_open_face} and \ref{claim:safe_distance}:

\begin{claim}\label{claim:sub_safe_distance0}
Let $\bar F_{A,S,B}$ be the closure of $F_{A,S,B}$. If $p \in F_{A,S,B}$ and $f(p) = 0$ then $f\vert_{K \cap \bar F_{A,S,B}} \equiv 0$.
\end{claim}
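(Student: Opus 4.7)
The plan is to argue by contradiction, pretty much in the same spirit as Claim \ref{claim:zero_open_face}, but using the subdomain version of polynomial-boundedness and exploiting the structure of the closure $\bar F_{A,S,B}$. Concretely, suppose there exists $q \in K \cap \bar F_{A,S,B}$ with $f(q) > 0$. Since the open faces partition the hypercube, $q$ lies in a unique open face $F_{A',S',B'}$, and the description of $\bar F_{A,S,B}$ tells us that $A \subseteq A'$ and $B \subseteq B'$, which forces $S' \subseteq S$ (because $[n] = A \sqcup S \sqcup B = A' \sqcup S' \sqcup B'$).

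Then I apply the subdomain polynomial-boundedness condition to the face $F_{A',S',B'}$: since $f(q) > 0$ at a point of $K \cap F_{A',S',B'}$, there exist $m$ and $c > 0$ with
\[
f(r) \;\geq\; c\cdot\bigl( (1-r)^{A'}\cdot r^{S'}(1-r)^{S'}\cdot r^{B'}\bigr)^m, \qquad \forall r \in K.
\]
The key step is to evaluate the right-hand side at $r = p \in F_{A,S,B}$ and show it is strictly positive, contradicting $f(p) = 0$. This reduces to checking each factor separately: for $i \in A' \setminus A$, we have $i \in S$ (since $i \notin A$ and $i \notin B' \supseteq B$), so $1 - p_i > 0$; the indices $i \in A$ contribute $1 - p_i = 1$; symmetrically for $B'$; and $S' \subseteq S$ means every $i \in S'$ has $0 < p_i < 1$. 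Thus each of $(1-p)^{A'}$, $p^{B'}$, $p^{S'}(1-p)^{S'}$ is strictly positive, giving $f(p) > 0$, the desired contradiction.

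There is no real obstacle here beyond bookkeeping on the partition, since the lemma is essentially a mechanical upgrade of Claim \ref{claim:zero_open_face} to the subdomain setting once one notices that the hypotheses $A \subseteq A'$, $B \subseteq B'$ precisely guarantee that the lower-bound polynomial of every face in $\bar F_{A,S,B}$ is nonzero at any point of $F_{A,S,B}$. The only conceptual point worth flagging in the write-up is why we get $S' \subseteq S$, and why the factors indexed by $A' \setminus A$ and $B' \setminus B$ (which a priori could worry us) are actually harmless because those indices must live in $S$.
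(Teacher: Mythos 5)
Your proof is correct and follows essentially the same route as the paper's: identify the open face $F_{A',S',B'}$ containing $q$, note $A\subseteq A'$, $B\subseteq B'$ (hence $S'\subseteq S$), invoke the subdomain polynomial-boundedness of $f$ for that face, and evaluate the resulting lower bound at $r=p\in F_{A,S,B}$ to get $f(p)>0$, a contradiction. If anything, your bookkeeping of the indices in $A'\setminus A$ and $B'\setminus B$ is more explicit than the paper's (which also contains a small typo, writing $f(q)\ge\dots$ where the contradiction comes from evaluating the bound at $p$), but the underlying argument is identical.
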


\begin{proof}
The closure of $F_{A,S,B}$ is the union of all open faces $F_{A',S',B'}$ where $A \subseteq A'$ and $B \subseteq B'$. Now there is some point $q \in K \cap F_{A',S',B'}$ such that $f(q) > 0$ then $f(q) \geq c \cdot ((1-q)^{A'} q^{S'} (1-q)^{S'} q^{B'})^m$ which contradicts the fact that $f(p) = 0$ since $p_i < 1$ for $i \in A'$ since $A' \subseteq A \cup S$,  $p_i > 0$ for $i \in B'$ since $B' \subseteq B \cup S$  and $0 < p_i < 1$ for $i \in S' \subseteq S$.
\end{proof}

\begin{claim}\label{claim:sub_safe_distance}
Let $\bar F_{A,S,B}$ be the closure of $F_{A,S,B}$. If $f\vert_{K \cap \bar F_{A,S,B}} \equiv 0$, there is some $\delta$ such that if $f(q) < \frac{1}{2}$ for all $q \in K \cap B_\infty(\bar F_{A,S,B}, \delta)$.
\end{claim}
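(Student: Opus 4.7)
The plan is to prove this by a short sequential compactness argument, proceeding by contradiction. The key subtlety compared to Claim~\ref{claim:safe_distance} is that the hypothesis only gives $f \equiv 0$ on the intersection $K \cap \bar F_{A,S,B}$, whereas the conclusion concerns the neighborhood $K \cap B_\infty(\bar F_{A,S,B}, \delta)$; a point $q \in K$ that is close to $\bar F_{A,S,B}$ need not be close to any point of $K \cap \bar F_{A,S,B}$, so the clean separation-of-compact-sets argument used earlier does not transplant directly.

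First I would suppose the conclusion fails. Then for every integer $n \geq 1$ there is $q_n \in K \cap B_\infty(\bar F_{A,S,B}, 1/n)$ with $f(q_n) \geq \tfrac{1}{2}$, and for each such $q_n$ we can pick a witness $r_n \in \bar F_{A,S,B}$ with $\norm{q_n - r_n}_\infty < 1/n$.

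Next, since $K$ is compact (Lemma~\ref{lemma:compactness}), I would pass to a subsequence along which $q_n \to q^* \in K$. Because $\norm{q_n - r_n}_\infty < 1/n \to 0$, we also have $r_n \to q^*$, and closedness of $\bar F_{A,S,B}$ then gives $q^* \in \bar F_{A,S,B}$. Therefore $q^* \in K \cap \bar F_{A,S,B}$, and the hypothesis forces $f(q^*) = 0$. On the other hand, continuity of $f$ (together with $f(q_n) \geq \tfrac{1}{2}$) yields $f(q^*) = \lim_n f(q_n) \geq \tfrac{1}{2}$, a contradiction.

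There is no serious obstacle in this argument — it is essentially a one-shot compactness/continuity argument. The only point worth flagging is that it genuinely uses the closedness of both $K$ and $\bar F_{A,S,B}$ to guarantee that the limit lies in their intersection, which is precisely the gap that prevents the direct adaptation of the proof of Claim~\ref{claim:safe_distance}.
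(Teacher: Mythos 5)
Your proof is correct and follows essentially the same route as the paper: assume the conclusion fails, extract a sequence $q_n \in K \cap \B_\infty(\bar F_{A,S,B}, 1/n)$ with $f(q_n) \geq \tfrac{1}{2}$, pass to a convergent subsequence by compactness of $K$, observe the limit $q^*$ lies in $K \cap \bar F_{A,S,B}$, and derive the contradiction $0 = f(q^*) \geq \tfrac{1}{2}$ by continuity. You actually spell out a step the paper leaves implicit, namely why the limit belongs to $\bar F_{A,S,B}$, by introducing the witnesses $r_n$; this is a welcome bit of extra care rather than a deviation.
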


\begin{proof}
For the second part, if no such $\delta$ exists, then there must be a sequence of points $q_t$ with $q_t \in K \cap \B_\infty(\bar F_{A,S,B}, \frac{1}{t})$ such that $f(q_t) \geq \frac{1}{2}$. Since $K$ is compact, there must be a subsequence of $q_t$ that converges to a point $q^* \in K \cap \bar F_{A,S,B}$. Since $f(q_t) \geq \frac{1}{2}$ we have also $f(q^*) \geq \frac{1}{2}$. But this contradicts the previous paragraph, which shows that $f(q^*) = 0$.
\end{proof}

Now fix $\delta > 0$ small enough such that $\delta$ satisfies Claims \ref{claim:buffer} and Claim \ref{claim:sub_safe_distance}. We will mirror Sections \ref{subsec:fq_pos} and \ref{subsec:fq_zero} and prove Claim \ref{claim:ball} first for $f(q)>0$ and then for $f(q)=0$. 

\subsubsection{Claim for $f(q)>0$}
We will set $\epsilon = \delta/2$.
Observe that $\norm{Z_{t,\epsilon} - Y_{t,\epsilon}}_\infty \leq \epsilon$ so $\norm{Y_{t,\epsilon} - p}_\infty \leq \norm{Y_{t,\epsilon} - Z_{t,\epsilon}}_\infty + \norm{Z_{t,\epsilon} - p}_\infty \leq \epsilon + \norm{Z_{t,\epsilon} - p}_\infty$, hence:
$$
\P_p\left[ f(Z_{t,\epsilon}) \geq \frac{1}{2}\right] \leq
\P_p\left[\norm{Z_{t,\epsilon} - p}_\infty > \delta\right]
\leq
\P_p\left[\norm{Y_{t,\epsilon} - p}_\infty > \frac{\delta}{2}\right] 
$$
By Lemma \ref{lemma:prob_Y}, the last probability is at most $2\cdot \P_p\left[\norm{\bar X_{t} - p}_\infty > \frac{\delta}{2}\right]$ for large enough $t$. From this point on, the proof is exactly the same as in Section \ref{subsec:fq_pos}.

\subsubsection{Claim for $f(q)=0$}
We set $\epsilon = \delta/4$ and as in Section \ref{subsec:fq_zero}  we split the probability of $\P_p[f(Z_{t,\epsilon}) \geq \frac{1}{2}]$ depending on which components have a large deviation in $Y_{t,\epsilon}$: 
$$\P_p\left[f(Z_{t,\epsilon}) \geq \frac{1}{2}\right] = \sum_{T \subseteq A \cup B} \P_p\left[f(Z_{t,\epsilon}) \geq \frac{1}{2} \text{ and } T = \left\{i \in A \cup B; \abs{(Y_{t,\epsilon})_i - p_i} > \frac{\delta}{2}\right\}\right]$$

As before, we argue that for all points $p$ in a small enough ball around $q$ we have:
\begin{equation}\label{eq:prob_T_sub}
\P_p\left[f(Z_{t,\epsilon}) \geq \frac{1}{2} \text{ and } T = \left\{i \in A \cup B; \abs{(Y_{t,\epsilon})_i - p_i} > \frac{\delta}{2}\right\}\right] \leq \frac{1}{2^{n+2}} f(p)
\end{equation}

For each $T \subseteq A \cup B$ we will consider two cases depending on the value of $f$ on $K \cap \bar F_{A\setminus T, S \cup T, B \setminus T}$. A first observation is that the closure $F_{A\setminus T, S \cup T, B \setminus T}$ contains $F_{A,S,B}$ and hence $K \cap \bar F_{A\setminus T, S \cup T, B \setminus T} \neq \emptyset$.\\

\noindent \emph{Case 1}: $f\vert_{K \cap \bar F_{A\setminus T, S \cup T, B \setminus T}} \equiv 0$. By Claim \ref{claim:sub_safe_distance} we have $f(q) < 1/2$ for all $q \in \B_\infty(\bar F_{A\setminus T, S \cup T, B \setminus T}, \delta)$. Since the distance between $Z_{t,\epsilon}$ and that face is at most $\max_{i \in A\cup B \setminus T}\abs{(Z_{t,\epsilon})_i - (Y_{t,\epsilon})_i} + \abs{(Y_{t,\epsilon})_i -p_i} + r \leq \frac{3\delta}{4}+r < \delta$ for a radius $r< \frac{\delta}{4}$. Hence the probability in equation \eqref{eq:prob_T_sub} is zero.\\

\noindent \emph{Case 2}: $f\vert_{K \cap \bar F_{A\setminus T, S \cup T, B \setminus T}} \not\equiv 0$. In that case, there is an open face $F_{A',S',B'}$ in the closure $\bar F_{A\setminus T, S \cup T, B \setminus T}$ such that $K \cap  F_{A',S',B'} \neq \emptyset$ and $f\vert_{F_{A',S',B'}} \not\equiv 0$. Since $f$ is polynomially bounded, we have:
$$f(p) \geq c \cdot ((1-p)^{A'} p^{S'} (1-p)^{S'} p^{B'})^m \geq c \cdot ((1-p)^{A\setminus T} p^{S \cup T} (1-p)^{S'} p^{B \setminus T})^m $$
where the inequality follows from the fact that
 $A \setminus T \subseteq A'$ and $B \setminus T \subseteq B'$ since $F_{A',S',B'} \subseteq \bar F_{A\setminus T, S \cup T, B \setminus T}$. From this point on, the proof is exactly the same as in Section \ref{subsec:fq_pos}, using Lemma \ref{lemma:prob_Y} to translate statements about $Y_{t,\epsilon}$ to $\bar X_t$.

\section{Extending Sampford Sampling to the Boundary}

Given probabilities $(p_1, p_2, \dots, p_n)$ with $\sum_{i} p_i = k$ (for some integer $1 \leq k < n$), we wish to sample a $k$-element sized subset $U$ of $\{1, 2, \dots, n\}$ with the property that $\P[i \in U] = p_i$. Sampford sampling is a method for accomplishing this given only sample access to coins with these probabilities. Sampford sampling proceeds as follows:

\begin{itemize}
    \item Sample each coin $i$ (with probability $p_i$) once and let $X_i \in \{0, 1\}$ be the outcome.
    \item Let $U = \{i; X_{i} = 1\}$ be the set of coins that came up heads. If $|U| \neq k$, go back to step 1.
    \item Choose a uniform random coin in $[n] \setminus U$, and flip it. If it comes up heads, output $U$. Otherwise, go back to step 1.
\end{itemize}

For each $U \subset [n]$ with $|U| = k$, define

$$g_{U}(x) = \frac{1}{n - k}\left(\prod_{i \in U} x_i\right)\cdot\left(\prod_{i \not\in U} (1-x_i)\right)\cdot\left( \sum_{i \not\in U} x_i\right).$$

Note that $g_{U}(p)$ is exactly the probability that we output a specific set $U$ for one individual trial of the above procedure (i.e., without restarting the procedure). It follows that the above procedure samples a subset $U$ with probability

$$f_{U}(x) = \frac{g_{U}(x)}{\sum_{V \subset [n], |V| = k} g_{U}(x)}.$$

Although $f_{U}(x)$ is defined on the interior $[0, 1]^n$, $f_{U}(x)$ is undefined for some points on the boundary of $[0, 1]^n$ (and even for some points within the subset $K = \{p \in [0, 1]^n ; \sum p_i = k\}$). For example, consider the point $p$ with $p_{i} = 1$ for $1 \leq i \leq k$ and $p_{i} = 0$ for $k + 1 \leq i \leq n$. Although this value of $p$ satisfies $\sum_{i} p_i = k$, $f_{U}(p)$ is undefined at this point; in particular, $g_{U}(p) = 0$ for every single subset $U$. Indeed, for this set of probabilities, it's easy to verify that the procedure described above can never terminate: every round we will sample the set $U = \{1, 2, \dots, k\}$, and then immediately fail the subsequent check in step 3.

In this section we will show that it is indeed possible to construct a multiparameter Bernoulli factory for this problem that terminates almost surely for all valid sets of coins (i.e., the compact subset $K = \{p \in [0, 1]^n ; \sum p_i = k\}$). To do so, we will show that there exists a continuous completion of $f_{U}(x)$ that satisfies the constraints of Theorem \ref{thm:sub_main_thm}.

Given a set $U \subset [n]$, let $e_{U} \in [0, 1]^n$ be the point such that $(e_{U})_i = 1$ if $i \in U$ and $(e_{U})_i = 0$ otherwise. Note that each $e_{U}$ with $|U| = k$ lies in $K$. We show that these are the only points of discontinuity of $f_{U}(x)$ within $K$, and that these discontinuities can be resolved.
\begin{lemma}\label{lemma:fU_continuous}
Define $\overline{f}_{U}(x): K \rightarrow [0, 1]$ as

$$\overline{f}_{U}(x) = \begin{cases}
f_{U}(x) & \text{for } x \in K, x \neq e_{U} \\
1 & \text{for } x = e_{U} \\
0 & \text{for } x = e_{V}, V \neq U.
\end{cases}$$

Then $\overline{f}_{U}(x)$ is a continuous function defined on all of $K$.
\end{lemma}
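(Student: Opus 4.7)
The plan is to split the proof into two regimes: continuity on the ``regular'' part $K \setminus \{e_V : |V| = k\}$, where $\overline{f}_U$ coincides with the rational function $f_U = g_U / \sum_W g_W$, and a separate limit analysis at each exceptional point $e_V$.

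First I would pin down exactly where the denominator vanishes on $K$. For $x \in K$, set $A = \{i : x_i = 0\}$, $B = \{i : x_i = 1\}$, $S = [n] \setminus (A \cup B)$. If $S = \emptyset$, then $x \in \{0,1\}^n$ and $\sum_i x_i = k$ forces $x = e_V$ for some $|V| = k$. Otherwise the same constraint yields $|B| < k < |B| + |S|$, so one can choose $W$ with $B \subsetneq W \subsetneq B \cup S$ and $|W| = k$, and every factor of $g_W(x)$ is then strictly positive. Thus off the $e_V$'s the denominator is locally bounded away from zero, so $\overline{f}_U = f_U$ is continuous there as a ratio of polynomials.

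The substantive task is continuity at each $e_V$. I would parametrize a neighborhood of $e_V$ in $K$ by $\epsilon_i := 1 - x_i$ for $i \in V$ and $\epsilon_i := x_i$ for $i \notin V$, so $\epsilon_i \geq 0$ and $x \to e_V$ iff $\epsilon \to 0$. The affine constraint $\sum_i x_i = k = |V|$ yields the balance $\sum_{i \in V} \epsilon_i = \sum_{i \notin V} \epsilon_i =: \alpha$. A direct computation gives
\[
g_V(x) \;=\; \frac{1}{n-k}\prod_{i \in [n]}(1-\epsilon_i)\cdot \alpha \;=\; \frac{\alpha}{n-k}\bigl(1+O(\alpha)\bigr),
\]
which is exactly linear in $\alpha$. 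For any other $k$-subset $W \neq V$, each $i \in W \setminus V$ contributes $x_i = \epsilon_i$ to $\prod_{i \in W} x_i$, and each $i \in V \setminus W$ contributes $1-x_i = \epsilon_i$ to $\prod_{i \notin W}(1-x_i)$, while the remaining factors stay bounded. Since $|W \triangle V| \geq 2$, this yields $g_W(x) = O\bigl(\prod_{i \in W \triangle V} \epsilon_i\bigr) = O(\alpha^2)$, and hence $\sum_W g_W(x) = g_V(x)\bigl(1 + O(\alpha)\bigr)$.

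With this asymptotic in hand, if $U = V$ then $f_U(x) = 1 + O(\alpha) \to 1 = \overline{f}_U(e_U)$, and if $U \neq V$ then $f_U(x) \leq g_U(x)/g_V(x) = O(\alpha) \to 0 = \overline{f}_U(e_V)$; both limits match the stipulated values, proving continuity at every exceptional point. The main obstacle is the degree-in-$\alpha$ comparison in the previous paragraph -- the separation between the leading linear behavior of $g_V$ and the uniform quadratic bound on all other $g_W$ -- which is the only place the affine constraint $\sum_i x_i = k$ really bites; once it is established, the two limits fall out immediately from the trivial combinatorial fact that distinct equal-size subsets of $[n]$ have symmetric difference at least $2$.
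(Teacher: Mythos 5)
Your proof is correct, but it takes a genuinely different route from the paper's. Where you perform a direct asymptotic analysis of the numerators $g_W$ near each exceptional vertex $e_V$ (introducing the coordinates $\epsilon_i$, exploiting the balance $\sum_{i\in V}\epsilon_i=\sum_{i\notin V}\epsilon_i=\alpha$, and separating the linear-in-$\alpha$ term $g_V$ from the $O(\alpha^2)$ contributions of all other $g_W$), the paper instead invokes the marginal identity $\sum_{|U|=k} f_U(p)\,e_U = p$ that holds on $K\setminus\{e_V\}$ by construction: reading off the $i$-th coordinate for any $i\in U''\setminus U'$ gives $f_{U''}(p)\le p_i\to 0$ as $p\to e_{U'}$, and the limit $f_{U'}(p)\to 1$ then follows by subtraction. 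The paper's argument is softer and shorter and sidesteps any computation with the $g_W$, but it leans on recognizing that the $f_U$ form an unbiased vertex decomposition of $p$; your argument is more explicit, requires no such structural observation, and as a byproduct actually delivers quantitative rates ($f_V = 1+O(\alpha)$, $f_U=O(\alpha)$) rather than bare limits. Both your handling of where the denominator vanishes and your degree-in-$\alpha$ comparison (using $|W\triangle V|\ge 2$ together with $\epsilon_i\le\alpha$) are sound, so the argument stands as written.
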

\begin{proof}
We first show that $f_{U}(x)$ is defined for all points in $K$ not of the form $e_{U}$ (for any subset $U \subset [n]$ of size $k$). To see this, fix an $x \in K$ and let $I(x) = \{i \in [n]; x_i > 0\}$ be the set of non-zero coordinates of $x$. Since $x \in K$, $\sum x_i = k$ and therefore $|I(x)| \geq k$ -- moreover, if $|I(x)| = k$, then we must have $x_{i} = e_{I(x)}$. It follows that if $x$ is not of the form $e_{U}$, then $|I(x)| \geq k+1$.

Now, choose any subset $U'$ of $I(x)$ of size $k$ that contains all indices $i$ such that $x_{i} = 1$ (since $\sum x_i = k$, there are at most $k$ such indices, and they all must belong to $I(x)$). Note that each of the three terms of $g_{U'}(x)$ are positive, so $g_{U'}(x) > 0$. It follows that the denominator of $f_{U}(x)$ is positive, and therefore $f_{U}(x)$ is well-defined for all such points (and therefore $\overline{f}_{U}$ is well-defined for all points in $K$).

It remains to show $\overline{f}_{U}$ is continuous on $K$. It suffices to check continuity at the points $e_{U'}$. To see this, observe that  $f_U(p)$ satisfies $\sum_{U; \abs{U}=k} f_U(p) e_U = p$ for all $p \in K \setminus \{e_U; \abs{U}=k\}$. Now, fix a sequence $p_t \rightarrow e_{U'}$ and some subset $U'' \neq U'$. There is some coordinate $i \in U'' \setminus U'$. Then looking at the $i$-th coordinate we have that: $f_{U''}(p_t) \leq (p_t)_i \rightarrow (e_{U'})_i=0$. Since $f_{U''} \geq 0$ we must have $f_{U''}(p_t) \rightarrow 0$. Hence $\overline{f}_{U''}$ is continuous at all points $e_{U'}$ with $U' \neq U''$. To check continuity when $U' = U''$ take any coordinate $i \in U'$ and $p_t \rightarrow e_{U'}$. Then $f_{U'}(p_t) = (p_t)_i - \sum_{U'' \neq U'} f_{U''}(p_t) (e_{U''})_i \rightarrow 1$ by the previous observation. Hence $\overline{f}_{U'}$ is also continuous at $e_{U'}$.

\end{proof}



We will now show that this function $\overline{f}_{U}$ satisfies the constraints of Theorem \ref{thm:sub_main_thm} on the set $K$ (and hence we can construct a multivariate Bernoulli factory for the function $\overline{f}_{U}$ that terminates a.s. for all points in $K$).

\begin{lemma}\label{lemma:fU_poly_bounded}
The function $\overline{f}_{U}$ is polynomially bounded on $K$.
\end{lemma}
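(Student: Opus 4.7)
The plan is to establish the bound with exponent $m=1$ and a uniform constant $c > 0$ via case analysis on the open faces $F_{A,S,B}$ that actually require a bound. First I identify these faces: for any $p \in K$ not of the form $e_V$, $\overline{f}_U(p) = g_U(p)/D(p)$ where $D(p) = \sum_V g_V(p)$, so $\overline{f}_U(p) > 0$ iff $g_U(p) > 0$. Inspecting the three factors of $g_U$, this happens on $F_{A,S,B}$ precisely when $U \cap A = \emptyset$ (else $p^U = 0$), $B \subseteq U$ (else $(1-p)^{[n]\setminus U} = 0$), and $S \setminus U \neq \emptyset$ (else $\sum_{i \notin U} p_i = 0$ on the face). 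Together with the vertex face $\{e_U\}$, where $\overline{f}_U(e_U) = 1$ by definition, these are the only faces requiring a bound.

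For a non-vertex relevant face I write $S_1 = S \cap U$ and $S_2 = S \setminus U$, so that $U = B \sqcup S_1$, $[n]\setminus U = A \sqcup S_2$, and $S_2 \neq \emptyset$. The key inequality I will use is $\sum_{j \in A \cup S_2} p_j \geq \prod_{j \in S_2} p_j$, which follows from $\prod \leq \min \leq \sum$ for non-empty tuples in $[0,1]$. Substituting into $g_U(p) = \tfrac{1}{n-k}\prod_B p_i \prod_{S_1} p_i \prod_A(1-p_i)\prod_{S_2}(1-p_i)\,\sum_{j \in A \cup S_2} p_j$ gives $g_U(p) \geq \tfrac{1}{n-k} (1-p)^A p^S (1-p)^S p^B$ pointwise on $[0,1]^n$. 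Combined with the uniform upper bound $D(p) \leq \binom{n}{k}$ (each $g_V \leq 1$), this delivers the polynomial bound at non-vertex points of $K$; at a vertex $e_V$ both sides vanish, since $S \neq \emptyset$ forces $p^S(1-p)^S = 0$ at $0/1$ points.

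The vertex face $\{e_U\}$ is the main subtlety, because the formula $g_U/D$ degenerates to $0/0$ at $e_U$ and I can no longer use the algebraic argument above. I split $K$ at a small radius $\delta$ around $e_U$. By continuity of $\overline{f}_U$ (Lemma~\ref{lemma:fU_continuous}) and $\overline{f}_U(e_U) = 1$, I choose $\delta$ so that $\overline{f}_U(p) \geq 1/2$ for $\|p - e_U\|_\infty < \delta$; the bound then holds on this neighborhood since $(1-p)^{[n]\setminus U} p^U \leq 1$. On the compact complement $\{p \in K : \|p - e_U\|_\infty \geq \delta\}$, the continuous function $p \mapsto \sum_{j \notin U} p_j = k - \sum_{i \in U} p_i$ vanishes only at $e_U$ (since $\sum_{i \in U} p_i = k$ with each $p_i \leq 1$ forces $p = e_U$), so by compactness it attains a positive minimum $\eta$; this gives $g_U(p) \geq (\eta/(n-k)) (1-p)^{[n]\setminus U} p^U$, and dividing by $D(p) \leq \binom{n}{k}$ yields the bound, trivially so at the other vertices $e_V$ where both sides vanish. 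Taking $c$ to be the minimum of the constants from the finitely many relevant faces finishes the argument with $m = 1$.
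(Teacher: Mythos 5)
Your proof is correct, but takes a genuinely different route from the paper's. The paper proves a single uniform bound $\overline{f}_U(p) \geq c\,\bigl(p^U (1-p)^{\overline{U}}\bigr)^m$ with $m=1$ on all of $K$, obtained by the combinatorial observation that every $g_V(p)$ is at most $k\sum_{i\notin U}p_i$ (for $V\neq U$, pick an index $i^*\in V\setminus U$ and use $g_V(p)\leq k p_{i^*}$), so the factor $\sum_{i\notin U}p_i$ in the numerator $g_U(p)$ cancels against the same factor bounding the denominator $D(p)$. That single bound then implies every face condition at once, since $U\subseteq S\cup B$ and $\overline U\subseteq A\cup S$ give $(1-p)^{\overline U}p^U\geq (1-p)^A p^S(1-p)^S p^B$, and it handles $p=e_U$ automatically with a fully explicit constant. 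You instead do a face-by-face case analysis: on non-vertex relevant faces, a direct algebraic argument using $\prod\leq\sum$ to push $g_U(p)$ above the face polynomial and then the crude bound $D(p)\leq\binom{n}{k}$; on the vertex face $\{e_U\}$, a continuity argument near $e_U$ plus a compactness argument lower-bounding $\sum_{j\notin U}p_j$ away from $e_U$. Both are valid and achieve $m=1$. The paper's version is slicker and gives an explicit $c$; yours is a bit longer and the compactness step makes $c$ non-explicit. Worth noticing: the bound you establish for the $\{e_U\}$ face \emph{is} the paper's uniform bound $\overline f_U(p)\geq c\,(1-p)^{\overline U}p^U$, and once you have it, the monotonicity observation above would make your separate non-vertex analysis unnecessary.
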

\begin{proof}
Let us begin by characterizing the faces $F_{A, S, B}$ where there exists a point $p \in F_{A, S, B} \cap K$ such that $\overline{f}_{U}(p) > 0$. In particular, we claim that if this happens, then $B \subseteq U \subseteq (S \cup B)$. To see why, note that if $i \in B$ then $p_i = 1$ for $p \in F_{A, S, B}$, so if we have a positive probability of outputting subset $U$, $U$ must contain element $i$. Similarly, if $i \in A$, then $p_i = 0$ for $p \in F_{A, S, B}$, so if we have a positive probability of outputting subset $U$, $U$ cannot contain element $i$ (and thus must be contained in $S \cup B$). 

We will now show that for all $p \in K$ and faces $F_{A, S, B}$ satisfying $B \subseteq U \subseteq (S \cup B)$, there exist constants $c, m > 0$ such that

\begin{equation}\label{eq:sampford_polynomial_bounded}
    \overline{f}_{U}(p) \geq c \cdot ((1-p)^{A} \cdot (1-p)^{S}p^{S} \cdot p^{B})^{m}.
\end{equation}

Let $\overline{U} = [n] \setminus U$. Note that since $p, 1-p \leq 1$, $U \subseteq (S \cup B)$, and $\overline{U} \subseteq (S \cup A)$, \eqref{eq:sampford_polynomial_bounded} is implied by the following inequality:

\begin{equation}\label{eq:sampford_poly_bounded2}
    \overline{f}_{U}(p) \geq c\left((1-p)^{\overline{U}} \cdot p^{U}\right)^{m}.
\end{equation}

We will prove \eqref{eq:sampford_poly_bounded2}. First, note that this holds for all $p$ of the form $e_{V}$ with $|V| = k$ (in particular, whenever $\overline{f}_{U}(e_{V}) = 0$, the RHS of \eqref{eq:sampford_poly_bounded2} is also $0$). It suffices to prove \eqref{eq:sampford_poly_bounded2} on all other points of $K$. On these points $\overline{f}_{U}(p) = f_{U}(p)$, so by substituting in the definition of $f_{U}(p)$, it suffices to prove that

\begin{equation}\label{eq:sampford_poly_bounded3}
    g_{U}(p) \geq c\left((1-p)^{\overline{U}} \cdot p^{U}\right)^{m} \sum_{|V| = k} g_{V}(p).
\end{equation}

Note that

$$g_{U}(p) = \frac{1}{n-k} p^{U} (1-p)^{\overline{U}} \sum_{i\in U} p_i.$$

Inequality \eqref{eq:sampford_poly_bounded3} thus reduces to

\begin{equation}\label{eq:sampford_poly_bounded4}
    \sum_{i\not\in U}p_i \geq c(n-k)\left((1-p)^{\overline{U}} \cdot p^{U}\right)^{m - 1} \sum_{|V| = k} g_{V}(p).
\end{equation}

We will now prove the following: for any $\eps > 0$, if $\sum_{i \not\in U} p_{i} = \eps$, then $g_{V}(p) \leq k\eps$ for each $V \subseteq [n]$ with $|V| = k$. Note that this implies \eqref{eq:sampford_poly_bounded4} (in particular, it suffices to set $c = 1/((n-k)k\binom{n}{k})$ and $m = 1$). 

To show the above claim, note that if $\sum_{i \not\in U} p_{i} = \eps$, then $p_{i} \leq \eps$ for all $i \not\in U$. Now, note that for any $V \neq U$ with $|V| = |U| = k$, there must exist an index $i^*$ belonging to $V$ that does not belong to $U$. It follows that for $V \neq U$.

$$g_{V}(p) = \frac{1}{n-k}p^{V}(1-p)^{\overline{V}} \left( \sum_{i\in V} p_i \right) \leq kp_{i^*} \leq k \eps.$$

For $V = U$, it immediately follows that $g_{U}(p) \leq \sum_{i \in U} p_i = \eps$, concluding our proof. 
\end{proof}

\begin{lemma}\label{lemma:sum_poly_bounded}
If functions $f_1, \hdots, f_k : K \rightarrow [0,1]$ are polynomially bounded, then so is their sum $f := f_1 + \hdots + f_k$.
\end{lemma}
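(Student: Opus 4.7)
The plan is to take $m := \max_{j} m_j$ and $c := \min_j c_j$, where for each $j \in [k]$ the pair $(m_j, c_j)$ witnesses the polynomial boundedness of $f_j$. Fix an arbitrary open face $F_{A,S,B}$ of the hypercube, and suppose there exists $q \in K \cap F_{A,S,B}$ with $f(q) > 0$. Since $f(q) = \sum_j f_j(q)$ and each $f_j \geq 0$, at least one summand $f_{j^*}(q)$ is strictly positive. Polynomial boundedness of $f_{j^*}$ then yields, for all $p \in K$,
\[
f_{j^*}(p) \;\geq\; c_{j^*} \left( (1-p)^A \cdot p^S (1-p)^S \cdot p^B \right)^{m_{j^*}}.
\]

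The key observation to get a \emph{uniform} bound is that the monomial $P_{A,S,B}(p) := (1-p)^A \cdot p^S (1-p)^S \cdot p^B$ takes values in $[0,1]$ everywhere on the hypercube, so raising it to a larger exponent can only decrease it: since $m \geq m_{j^*}$, we have $P_{A,S,B}(p)^{m_{j^*}} \geq P_{A,S,B}(p)^{m}$. Combining this with $c_{j^*} \geq c$ and the non-negativity $f(p) \geq f_{j^*}(p)$, we conclude
\[
f(p) \;\geq\; f_{j^*}(p) \;\geq\; c_{j^*}\, P_{A,S,B}(p)^{m_{j^*}} \;\geq\; c \cdot P_{A,S,B}(p)^m, \quad \forall p \in K,
\]
which is precisely the polynomial boundedness condition for $f$ with the pair $(m, c)$. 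The potential worry — making the bound hold simultaneously across all $3^n$ open faces of the hypercube — is automatically handled because $m$ and $c$ are chosen independently of the face, and for each face the argument picks an appropriate index $j^*$ (which may depend on the face, but need not affect the constants). No obstacle here is substantive; the lemma is essentially a book-keeping exercise on the exponents.
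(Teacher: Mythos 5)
Your proof is correct and follows essentially the same approach as the paper's: for each face pick an index $j^*$ whose $f_{j^*}$ is not identically zero on $K \cap F_{A,S,B}$, invoke its polynomial bound, and drop to $f \geq f_{j^*}$. You are slightly more careful than the paper in making the uniformity over faces explicit (taking $m = \max_j m_j$, $c = \min_j c_j$ and using that the base monomial lies in $[0,1]$), but this is exactly the book-keeping the paper's proof implicitly relies on.
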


\begin{proof}
If for a certain open face $F_{A,S,B}$ we have $f \vert_{K \cap F_{A,S,B}} \not\equiv 0$ then there must one one index $i$ such that $f_i \vert_{K \cap F_{A,S,B}} \not\equiv 0$ and since $f_i$ is polynomially bounded, we have $f(p) \geq f_i(p) \geq c ((1-p)^A p^S (1-p)^S p^B)^m$ for some constant $c > 0$ and integer $m \geq 0$.
\end{proof}

\begin{lemma}[Bernoulli race (\cite{dughmi2017bernoulli})]\label{lemma:bernoulli_race}
If functions $f_1,\hdots, f_k:K \rightarrow [0,1]$ can be implemented by a Bernoulli factory and $\sum_{j=1}^k f_j(p) > 0$ for all $p \in K$, then there exists a sampling algorithm that for each $p \in K$ samples an index $i \in [k]$ with probability proportional to $f_i(p)/(\sum_{j=1}^k f_j(p))$
\end{lemma}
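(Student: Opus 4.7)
The plan is to use a classical rejection-sampling scheme (a ``Bernoulli race''): in each round, first sample an index $i$ uniformly from $[k]$ using a finite tree of helper coins of known bias (for instance, by flipping a cascade of independent coins of biases $1/k,\, 1/(k-1),\, \ldots,\, 1/2$ until one succeeds), then invoke the Bernoulli factory implementing $f_i$ on fresh samples of the $p$-coins to obtain a bit $B \in \{0,1\}$. If $B = 1$, halt and output $i$; if $B = 0$, start a fresh, independent round.

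For correctness, the probability that a single round halts and outputs a specific $i$ is exactly $f_i(p)/k$, independently of previous rounds. Hence the per-round halting probability is $s(p) := k^{-1}\sum_{j=1}^k f_j(p)$, and conditioning on the first round in which the procedure halts gives that the output equals $i$ with probability $(f_i(p)/k)/s(p) = f_i(p)/\sum_{j=1}^k f_j(p)$, as required.

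For almost-sure termination, the key observation is that each $f_j$ is continuous on $K$, since implementability by a Bernoulli factory forces continuity (the subdomain analogue of Lemma \ref{lemma:necessary3} already invoked in the proof of Theorem \ref{thm:sub_main_thm}). Therefore $\sum_j f_j$ is continuous on the compact set $K$ and strictly positive everywhere by hypothesis, so $\alpha := \min_{p \in K}\sum_{j=1}^k f_j(p) > 0$ exists by Lemma \ref{lemma:compactness}. Consequently every round halts with probability at least $\alpha/k$ uniformly in $p$, the number of rounds is stochastically dominated by a geometric random variable with success probability $\alpha/k$, and the procedure terminates almost surely (in fact with exponentially decaying tail) for every $p \in K$. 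The only substantive step is this compactness argument that uniformly lower-bounds the per-round success probability on $K$; beyond it, the verification is entirely routine.
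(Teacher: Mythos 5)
Your algorithm and its correctness analysis coincide exactly with the paper's proof: sample a uniform index $i\in[k]$, run the factory for $f_i$, output $i$ on a $1$ and retry on a $0$, and then compute the per-round output probabilities. The only point of divergence is the termination argument: you invoke continuity of each $f_j$ and compactness of $K$ (Lemma~\ref{lemma:compactness}) to obtain a \emph{uniform} lower bound $\alpha>0$ on $\sum_j f_j$ over all of $K$. This is a correct extra fact, but it is not needed for the stated conclusion: the input biases $p$ are fixed (though unknown) throughout a single execution, so the rounds are i.i.d.\ with per-round success probability $\tfrac{1}{k}\sum_j f_j(p)>0$, and that alone already makes the number of rounds geometric and finite almost surely. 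This is exactly the paper's one-line termination argument, so your compactness detour is harmless but superfluous. (Likewise, the cascade of helper coins for sampling a uniform index is fine but unnecessary detail, since Definition~\ref{def:factory} already permits constant-bias coins in the factory tree.)
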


\begin{proof}
We sample an index $i \in [k]$ uniformly at random and then sample from the $f_i(p)$-coin. If it comes up $1$ we return index $i$. Otherwise we retry. The procedure terminates a.s. since it has a positive probability of outputting for each retry. Since each trial outputs index $i$ with probability proportional to $f_i(p)/k$, the overall procedure outputs index $i$ with probability $f_i(p)/(\sum_{j=1}^k f_j(p))$.
\end{proof}

\begin{theorem}
There exists a multiparameter factory for Sampford sampling that terminates everywhere in the set $K = \{p \in [0,1]^n ; \sum p_i = k \}$. 
\end{theorem}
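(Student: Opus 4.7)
The plan is to combine all of the previously established machinery: we have continuity and polynomial-boundedness of each $\overline{f}_U$ on $K$, and we have a characterization (Theorem \ref{thm:sub_main_thm}) of implementable functions on compact subdomains together with a Bernoulli race (Lemma \ref{lemma:bernoulli_race}) to stitch individual factories into a sampling procedure over all $k$-subsets.

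First I would verify that for each $k$-subset $U \subseteq [n]$, the function $\overline{f}_U : K \rightarrow [0,1]$ is implementable by a Bernoulli factory. Continuity is Lemma \ref{lemma:fU_continuous} and polynomial-boundedness of $\overline{f}_U$ is Lemma \ref{lemma:fU_poly_bounded}. The only condition from Theorem \ref{thm:sub_main_thm} that is not immediate is polynomial-boundedness of $1 - \overline{f}_U$. For this I would use that $\overline{f}_U(p)$ is the probability of outputting the subset $U$, so $\sum_{V : |V| = k} \overline{f}_V(p) = 1$ on all of $K$ (the limit case $p = e_{V'}$ is handled by the case analysis in the definition of $\overline{f}$). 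Hence
\[
1 - \overline{f}_U(p) = \sum_{V \neq U, \, |V| = k} \overline{f}_V(p),
\]
and since each $\overline{f}_V$ is polynomially bounded, Lemma \ref{lemma:sum_poly_bounded} gives that $1 - \overline{f}_U$ is too. Thus Theorem \ref{thm:sub_main_thm} yields a Bernoulli factory $\mathcal{F}_U$ for $\overline{f}_U$ that terminates almost surely for every $p \in K$.

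Next, to sample the actual subset, I would apply Lemma \ref{lemma:bernoulli_race} to the collection $\{\overline{f}_U\}_{|U|=k}$. The hypothesis $\sum_U \overline{f}_U(p) > 0$ is satisfied because the sum equals $1$ on $K$. The Bernoulli race then outputs index $U$ with probability $\overline{f}_U(p) / \sum_V \overline{f}_V(p) = \overline{f}_U(p)$, which matches the Sampford distribution on $K \setminus \{e_V : |V| = k\}$ and returns the correct deterministic answer at each $e_V$. By the marginal property of Sampford sampling, the resulting set $U$ satisfies $\P[i \in U] = p_i$ exactly for every $p \in K$.

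I do not expect a genuine obstacle here: every ingredient is already in place. The only place to be slightly careful is the identity $\sum_U \overline{f}_U \equiv 1$ on $K$, which must be argued even at the discontinuity points $e_V$ handled by cases in Lemma \ref{lemma:fU_continuous}; this follows since $\overline{f}_V(e_V) = 1$ and $\overline{f}_U(e_V) = 0$ for $U \neq V$. Everything else reduces to an application of Theorem \ref{thm:sub_main_thm} followed by a Bernoulli race, yielding a factory that terminates almost surely everywhere on $K$, including on boundary points such as $(1, \dots, 1, 0, \dots, 0)$ where the classical Sampford procedure diverges.
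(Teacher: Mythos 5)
Your proposal is correct and follows essentially the same route as the paper's proof: verify the hypotheses of Theorem \ref{thm:sub_main_thm} for each $\overline{f}_U$ via Lemmas \ref{lemma:fU_continuous}, \ref{lemma:fU_poly_bounded}, and \ref{lemma:sum_poly_bounded} (using the identity $1 - \overline{f}_U = \sum_{V \neq U} \overline{f}_V$), then combine the resulting factories with the Bernoulli race of Lemma \ref{lemma:bernoulli_race}.
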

\begin{proof}
We first argue that $\overline{f}_{U}$ satisfies the constraints of Theorem \ref{thm:sub_main_thm} for the set $K$, and therefore that we can construct a multivariate Bernoulli factory for $\overline{f}_U$ that terminates a.s. for all $p \in K$.

To show this, we must show that $\overline{f}_{U}$ is continuous on $K$, and that both $\overline{f}_{U}$ and $1 - \overline{f}_{U}$ are polynomially bounded on $K$. We have already shown that $\overline{f}_{U}$ is continuous on $K$ (Lemma \ref{lemma:fU_continuous}) and that $\overline{f}_{U}$ is polynomially bounded on $K$ (Lemma \ref{lemma:fU_poly_bounded}). To see that $1 - \overline{f}_{U}$ is polynomially bounded on $K$, note that $1 - \overline{f}_{U} = \sum_{V \neq U} \overline{f}_{V}$. Since this is a sum of functions each polynomially bounded on $K$, it follows that $1 - \overline{f}_{U}$ is polynomially bounded on $K$ (Lemma \ref{lemma:sum_poly_bounded}). 

Finally, we will use our factories that output a coin with probability $\overline{f}_{U}(p)$ to construct a factory that outputs an actual subset $U$ using the Bernoulli race in Lemma \ref{lemma:bernoulli_race}.

\end{proof}

\section{Combinatorial Bernoulli Factories}

Given a polytope $\PP \subseteq [0, 1]^n$ (with vertices $V(\PP)$), a \textit{combinatorial Bernoulli factory} for $\mathcal{P}$ is an exact sampling procedure that, given coins $(p_1, p_2, \dots, p_n) \in \mathcal{P}$, outputs a vertex $v \in V(\mathcal{P})$ such that $\E_{p}[v] = p$. More formally, a combinatorial Bernoulli factory is a collection\footnote{The Bernoulli race in Lemma \ref{lemma:bernoulli_race} is used to convert this collection of factories into a procedure for sampling a vertex with the desired probability.
} of $|V(\mathcal{P})|$ multiparameter Bernoulli factories for functions $f_{v}(p)$ satisfying (for all $p \in \mathcal{P}$):

$$\sum_{v \in V(\mathcal{P})} f_{v}(p) = 1 \quad \text{ and } \quad \sum_{v \in V(\mathcal{P})} vf_{v}(p) = p.$$

Combinatorial Bernoulli factories capture a wide range of combinatorial sampling problems. For example, the problem of Sampford sampling is equivalent to the problem of constructing a combinatorial Bernoulli factory for the polytope $\PP = [0, 1]^n \cap \{ p \in \R^n | \sum_{i}p_i = k\}$. Other problems captured by combinatorial Bernoulli factories include exact sampling of matchings and flows.

\cite{NiazadehLS21} show that any polytope $\PP$ that admits a combinatorial Bernoulli factory must be of the form $\PP = [0, 1]^n \cap K$, where $K$ is an affine subspace of $\R^n$. Moreover, they give a general method for constructing combinatorial Bernoulli factories for any such polytope; however, as with existing implementations of Sampford sampling, the factories they generate can fail to terminate at some points on the boundary of $[0, 1]^n$. In this section, we provide an alternate method for constructing combinatorial Bernoulli factories that works for all polytopes of the form $\PP = [0, 1]^n \cap K$, \emph{everywhere} in $\PP$.

We begin by presenting our new construction. Given a $d$-dimensional polytope $\PP$ and a vertex $w$ of $\PP$, we say that the \textit{fan triangulation} $\mathcal{T}_w$ of $\PP$ corresponding to vertex $w$ is the division of $\PP$ into simplices with disjoint interiors formed by connecting $w$ to each facet of $\PP$ that does not contain $w$ (if a facet contains more than $d$ vertices of $\PP$, arbitrarily triangulate it into $(d-1)$-dimensional simplices first).

Note that given a simplex, there is a unique way to write a point in the simplex as a convex combination of its vertices. This implies that any triangulation $\mathcal{T}$ of $\PP$ gives rise to a natural way to decompose a point $p \in \PP$ as a convex combination of the vertices of $\PP$: namely, find the simplex $T$ of the triangulation that $p$ belongs to, and write $p$ as a convex combination of the vertices of $T$. Let $g^{(w)}_{v}(p)$ be the coefficient of vertex $v$ in the decomposition stemming from the fan triangulation $\mathcal{T}_w$. Note that all these functions $g^{(w)}_v : \PP \rightarrow [0,1]$ are continuous since they are continuous on each simplex of the triangulation and agree on the common faces.

\begin{equation}\label{eq:decomp_fac}
f_{v}(p) = \frac{1}{|V(\PP)|} \sum_{w \in V(\PP)} g^{(w)}_{v}(p).
\end{equation}

By construction, it follows that $\sum_{v}f_{v}(p) = 1$ and $\sum_{v} vf_{v}(p) = p$ for all $p \in \PP$. In the remainder of the section, we will show that if $\PP$ is of the form $[0, 1]^n \cap K$ for some affine subspace $K$, then each $f_{v}(p)$ satisfies the conditions of Theorem \ref{thm:sub_main_thm} (and thus can be implemented by a multiparameter Bernoulli factory).\\

We will need the following lemma.

\begin{lemma}\label{lem:comb_geometry}
Let $\PP$ be a polytope of the form $[0, 1]^n \cap K$, where $K$ is an affine subspace of $\R^n$. Let $v \in V(\PP)$ be a vertex of $\PP$, and let $F$ be a facet of $\PP$ that doesn't contain $v$. Then there exists a coordinate $i$ such that either $v_i > 0$ and $x_i = 0$ for all $x \in F$, or $v_i < 1$ and $x_i = 1$ for all $x \in F$.
\end{lemma}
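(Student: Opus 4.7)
The approach is to use the special structure of $\PP = [0,1]^n \cap K$: after replacing $K$ by the affine hull of $\PP$ (which does not change $\PP$, since $K \cap [0,1]^n = \textrm{aff}(\PP) \cap [0,1]^n$), the polytope is cut out inside its affine hull solely by the coordinate inequalities $x_i \geq 0$ and $x_i \leq 1$ for $i \in [n]$. By standard polytope theory, every facet of an $H$-polytope is the intersection of the polytope with the hyperplane of a single non-redundant defining inequality. Hence the given facet $F$ must satisfy $F = \PP \cap \{x : x_i = c\}$ for some coordinate $i \in [n]$ and some $c \in \{0,1\}$; once this is established, the lemma follows immediately.

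To justify the facet characterization concretely, I would pick a point $p$ in the relative interior of $F$. Since $F$ is a proper face of $\PP$, $p$ lies on the relative boundary of $\PP$ inside $\textrm{aff}(\PP)$, so at least one \emph{non-degenerate} coordinate inequality (i.e., one that is not automatically an equality on $\textrm{aff}(\PP)$) must be tight at $p$: there exist $i \in [n]$ and $c \in \{0,1\}$ with $p_i = c$ and $\PP \not\subseteq \{x : x_i = c\}$. Otherwise every inequality tight at $p$ would be forced by $\textrm{aff}(\PP)$ and the non-degenerate ones would be strict at $p$, placing $p$ in $\textrm{relint}(\PP)$. Setting $G := \PP \cap \{x : x_i = c\}$, a short relative-interior argument at $p$ (extending a segment through $p$ inside $F$) shows that $x_i = c$ is in fact tight on all of $F$, so $F \subseteq G$. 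Since $G$ is a proper face of $\PP$ (by non-degeneracy) and $F$ is a facet, maximality forces $F = G$.

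Given this characterization, the conclusion is routine. Every $x \in F$ satisfies $x_i = c$ by construction. Since $v \in \PP \subseteq [0,1]^n$ and $v \notin F$, we must have $v_i \neq c$; when $c = 0$ this forces $v_i > 0$, and when $c = 1$ it forces $v_i < 1$, matching the two cases in the statement. The only real obstacle is the facet characterization above; the rest is just unpacking definitions.
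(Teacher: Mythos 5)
Your proof is correct and follows essentially the same approach as the paper: both hinge on the key observation that, since $\PP = [0,1]^n \cap K$ with $K$ affine, every facet of $\PP$ is of the form $\PP \cap \{x_i = c\}$ for some coordinate $i$ and $c \in \{0,1\}$, after which the conclusion ($v_i \neq c$, hence $v_i > 0$ or $v_i < 1$) is immediate. The paper asserts the facet characterization in one line as a consequence of $K$ being an affine subspace, whereas you supply the underlying polyhedral justification (via a relative-interior point of $F$ and maximality of facets); this is a worthwhile expansion but not a different route.
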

\begin{proof}
Since $K$ is an affine subspace, each facet of $\PP$ can be written as the intersection of a facet of $[0, 1]^n$ with $K$. Each facet of $[0, 1]^n$ is given by a single constraint of the form $x_i = 0$ or $x_{i} = 1$. 

Assume that the facet $F$ is equal to $\{x_i = 0\} \cap K$. Then if $v$ is not contained in $F$, it must be the case that $v_i \neq 0$ (and thus $v_i > 0$ and the first condition of the theorem holds). Similarly, if the facet $F$ is given by $\{x_i = 1\} \cap K$, then any vertex $v$ not contained in $F$ must satisfy $v_i < 1$, and the second condition of the theorem holds.
\end{proof}

Note that the previous lemma fails if $\PP$ is not the intersection of the hypercube with an affine subspace. For example, if $\PP$ is the convex hull of $(0,0), (1,0), (0,1)$, the lemma fails for $v=(0,0)$ and $F$ the opposite edge. This is an important sanity check, as \cite{NiazadehLS21} shows that no other polytope admits a combinatorial Bernoulli factory.

We can now show that $f_{v}(p)$ is polynomially bounded (and thus that we can construct combinatorial Bernoulli factories that terminate everywhere on $\PP$).

\begin{lemma}\label{lem:f_poly_bounded}
If $\PP$ is a polytope of the form $[0, 1]^n \cap K$ where $K$ is an affine subspace of $\R^n$, then the functions $f_{v}(p)$ are polynomially bounded on $\PP$.
\end{lemma}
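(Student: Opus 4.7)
The plan is to prove the stronger pointwise bound $f_v(p) \geq |V(\PP)|^{-1}\,Q_v(p)$ on all of $\PP$, where
\[
Q_v(p) \;:=\; \prod_{i:\, v_i=0}(1-p_i) \;\cdot\; \prod_{i:\, v_i=1} p_i \;\cdot\; \prod_{i:\, 0 < v_i < 1} p_i(1-p_i),
\]
and then to deduce polynomial-boundedness (with $m=1$) by showing that $Q_v$ dominates every face-specific polynomial that matters. Write $I_v^0,I_v^1,I_v^*$ for the three coordinate classes above. The combinatorial half is routine: if $f_v(q)>0$ for some $q\in \PP\cap F_{A,S,B}$, then expanding $q=\sum_u f_u(q)u$ coordinate-by-coordinate at $i\in A$ and $i\in B$ forces $v_i=0$ and $v_i=1$ respectively (since $u_i\in[0,1]$ and the weights are non-negative), so $A\subseteq I_v^0$, $B\subseteq I_v^1$, and $I_v^*\subseteq S$. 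A direct computation then gives
\[
(1-p)^A p^S (1-p)^S p^B \;=\; Q_v(p)\cdot p^{I_v^0\setminus A}\,(1-p)^{I_v^1\setminus B} \;\leq\; Q_v(p),
\]
since the extra factors lie in $[0,1]$.

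To establish $f_v(p)\geq |V(\PP)|^{-1}\,Q_v(p)$, I would discard all but the $w=v$ summand in \eqref{eq:decomp_fac} and reduce to showing $g_v^{(v)}(p)\geq Q_v(p)$. By construction of $\T_v$, every simplex in it contains $v$, so each $p\in\PP$ lies in some simplex $\conv(v,T)$ with $T$ inside a facet $F_j$ of $\PP$ not containing $v$; writing $p=\alpha v+(1-\alpha)q$ with $q\in\conv(T)\subseteq F_j$ and applying the affine function $\phi_j$ on $K$ characterized by $\phi_j(v)=1$ and $\phi_j|_{F_j}=0$, linearity yields $g_v^{(v)}(p)=\alpha=\phi_j(p)$. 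It therefore suffices to show $\phi_j(p)\geq Q_v(p)$ for every facet $F_j$ not containing $v$ (and in particular for the $j$ determined by $p$).

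This last inequality is the crux of the argument and is where Lemma \ref{lem:comb_geometry} enters. That lemma gives a coordinate $i_j$ such that either $F_j\subseteq\{x_{i_j}=0\}$ with $v_{i_j}>0$ (so $i_j\in I_v^1\cup I_v^*$), in which case a direct computation gives $\phi_j(p)=p_{i_j}/v_{i_j}$, or $F_j\subseteq\{x_{i_j}=1\}$ with $v_{i_j}<1$ (so $i_j\in I_v^0\cup I_v^*$), in which case $\phi_j(p)=(1-p_{i_j})/(1-v_{i_j})$. In either case the factor of $Q_v$ carrying coordinate $i_j$ is at most $p_{i_j}$ (respectively $1-p_{i_j}$), all remaining factors of $Q_v$ lie in $[0,1]$, and $v_{i_j}\leq 1$ (respectively $1-v_{i_j}\leq 1$); chaining these inequalities gives $Q_v(p)\leq p_{i_j}\leq p_{i_j}/v_{i_j}=\phi_j(p)$ (and symmetrically in the second subcase). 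The main obstacle is finding the right polynomial $Q_v$: once $Q_v$ is defined so that each coordinate-aligned facet of $\PP$ contributes to exactly one of its factors, the domination $Q_v\leq\phi_j$ reduces to elementary $[0,1]$-arithmetic, and the face-to-face bookkeeping needed to translate this into polynomial-boundedness is equally elementary.
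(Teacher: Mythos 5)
Your argument is correct and follows essentially the same route as the paper's: both reduce to the single summand $g^{(v)}_v$ of $f_v$, invoke Lemma \ref{lem:comb_geometry} to obtain a coordinate $i$ with $g^{(v)}_v(p) = p_i/v_i$ (or $(1-p_i)/(1-v_i)$) on each simplex of the fan triangulation $\mathcal{T}_v$, and then use that the target polynomial already carries a factor bounded by $p_i$ (respectively $1-p_i$) while all remaining factors lie in $[0,1]$. Your packaging of the bound into the single vertex-dependent polynomial $Q_v(p)$, which dominates every admissible face polynomial at once, makes the required uniformity over all $p\in\PP$ a bit more explicit than the paper's face-at-a-time phrasing, but the underlying mechanism is identical.
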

\begin{proof}
Fix a vertex $v$ of $\PP$. To begin, we'll argue that $g^{(v)}_{v}(p) > 0$ for exactly the points $p \in \PP$ where $f_{v}(p) > 0$. To see this, note that if $g^{(v)}_{v}(p) > 0$, then $f_{v}(p) > 0$ (since $g^{(v)}_{v}(p)$ is a summand in $f_{v}(p)$). But conversely, by construction $g^{(v)}_{v}(p)$ only equals $0$ on (closed) faces of $\PP$ that do not contain $v$. Since $f_{v}(p)$ also forms a convex decomposition of $p$ into the vertices of $\PP$, $f_{v}(p)$ must also equal $0$ on all these closed faces and it follows that $g_{v}(p) = 0$ implies that $f_{v}(p) = 0$.

We will now show that $g^{(v)}_{v}(p)$ is polynomially bounded on $\PP$; it then follows from \eqref{eq:decomp_fac} that $f_v(p)$ is polynomially bounded on $\PP$ (since $f_v(p)>0$ implies $g_v^{(v)}(p) > 0$). 

Recall that $g^{(v)}_{v}(p)$ is the decomposition induced by the fan triangulation $\mathcal{T}_v$. That is, to compute the value of $g^{(v)}_{v}(p)$, we first must identify the simplex of $\mathcal{T}_v$ that $p$ belongs to, and (uniquely) write $p$ as a convex combination of the vertices of that simplex.

Let us assume that $p$ belongs to the simplex $T \in \mathcal{T}_v$. Since $\mathcal{T}_v$ is the fan triangulation for vertex $v$, $T$ must be the convex hull of a facet $F$ of $\PP$ (not containing $v$) and $v$. By Lemma \ref{lem:comb_geometry}, there exists some coordinate $i$ such that either $v_i > 0$ and $F \subset \{x; x_i = 0\}$ or $v_{i} < 1$ and $F \subset \{x; x_i = 1\}$. 

In the first case, note that $g^{(v)}_{v}(p)$ must equal $p_i/v_i$ (since $v_i$ is the only vertex of $T$ that contains a positive $i$th component). We claim that given this, $g^{(v)}_{v}(p)$ is polynomially bounded. To see why, note that if $f_v(p)>0$ then $g^{(v)}_{v}(p) > 0$ and as a consequence $p_i > 0$ (since $v_i > 0$). It follows that if $g^{(v)}_{v}(p) > 0$ and $p$ belongs to some open face $F_{A, S, B}$ of the hypercube, then $i \not\in A$. But now, note that if $i \not\in A$, then there exist constants $c$ and $m$ such that

\begin{equation}\label{eq:decomp_poly_bounded}
\frac{p_i}{v_i} \geq c \cdot ((1-p)^{A}\cdot p^S(1-p)^S\cdot p^B)^{m}.
\end{equation}

In particular, since $i$ lies either in $S$ or $B$, it suffices to take $m = 1$ and $c = 1/v_i$. 

Similarly, in the second case $g^{(v)}_{v}(p)$ must equal $(1-p_i)/(1-v_i)$. A similar argument shows that $g^{(v)}_{v}(p)$ is polynomially bounded in this case (now we must have $i \not\in B$, and a factor of $(1-p_i)$ will appear on the RHS of the analogue of \eqref{eq:decomp_poly_bounded}).
\end{proof}

\begin{theorem}
If $\PP$ is a polytope of the form $[0, 1]^n \cap K$, where $K$ is an affine subspace $\R^n$, then there exists a combinatorial Bernoulli factory for $\PP$ which terminates almost surely everywhere on the boundary.
\end{theorem}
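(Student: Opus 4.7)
The plan is to assemble the pieces already established earlier in the section and apply Theorem \ref{thm:sub_main_thm} coordinate-by-coordinate. Concretely, for each vertex $v \in V(\PP)$ we take the function $f_v(p)$ defined in \eqref{eq:decomp_fac}, and verify it meets the three hypotheses of Theorem \ref{thm:sub_main_thm}: that $f_v$ is continuous on $\PP$, that $f_v$ is polynomially bounded on $\PP$, and that $1-f_v$ is polynomially bounded on $\PP$.

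First I would note continuity: each $g_v^{(w)}$ is continuous on $\PP$ (as remarked right before \eqref{eq:decomp_fac}, since on each simplex of the fan triangulation $\T_w$ the coefficients of the unique convex decomposition are affine functions of $p$, and adjacent simplices agree on their shared face), so $f_v$, being an average of $|V(\PP)|$ such functions, is continuous. Second, $f_v$ is polynomially bounded on $\PP$ by Lemma \ref{lem:f_poly_bounded}. Third, for $1-f_v$, I would use the identity $\sum_{u \in V(\PP)} f_u(p) = 1$ coming from the fact that each $g_u^{(w)}(p)$ forms a convex decomposition of $p$. Therefore
\begin{equation*}
1 - f_v(p) = \sum_{u \in V(\PP), u \neq v} f_u(p),
\end{equation*}
which is a finite sum of polynomially bounded functions (again by Lemma \ref{lem:f_poly_bounded}), hence polynomially bounded on $\PP$ by Lemma \ref{lemma:sum_poly_bounded}.

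With all three conditions verified, Theorem \ref{thm:sub_main_thm} supplies, for every $v \in V(\PP)$, a Bernoulli factory that outputs a coin with bias $f_v(p)$ and terminates almost surely for every $p \in \PP$. To turn this collection into an actual sampler over vertices, I would appeal to the Bernoulli race of Lemma \ref{lemma:bernoulli_race}: since $\sum_v f_v(p) = 1 > 0$ for all $p \in \PP$, the race terminates a.s. and outputs vertex $v$ with probability $f_v(p)/\sum_u f_u(p) = f_v(p)$. By construction of the $f_v$, we have $\sum_v v f_v(p) = p$, so this is exactly a combinatorial Bernoulli factory for $\PP$ that terminates a.s. everywhere in $\PP$, including on the boundary of $[0,1]^n$.

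There is no real obstacle remaining; the genuinely difficult work lives in Lemma \ref{lem:f_poly_bounded} (which leverages Lemma \ref{lem:comb_geometry} characterizing how facets of $[0,1]^n \cap K$ look) and in Theorem \ref{thm:sub_main_thm} itself. The only point that warrants care is verifying polynomial boundedness of $1 - f_v$ from polynomial boundedness of the other $f_u$'s; this is immediate from Lemma \ref{lemma:sum_poly_bounded} but is the only step not stated explicitly in the section and so is worth writing out.
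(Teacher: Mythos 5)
Your proposal is correct and matches the paper's own proof essentially step for step: verify continuity of $f_v$ from continuity of the $g_v^{(w)}$, polynomial boundedness of $f_v$ via Lemma \ref{lem:f_poly_bounded}, polynomial boundedness of $1-f_v = \sum_{u\neq v} f_u$ via Lemma \ref{lemma:sum_poly_bounded}, and then invoke Theorem \ref{thm:sub_main_thm} (the paper leaves the Bernoulli race of Lemma \ref{lemma:bernoulli_race} implicit, as noted in its footnote). No gaps.
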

\begin{proof}
It suffices to show that the functions $f_{v}(p)$ defined in \eqref{eq:decomp_fac} satisfy the conditions of Theorem \ref{thm:sub_main_thm}. Namely, we must show that $f_{v}(p)$ are continuous, and that both $f_{v}(p)$ and $1 - f_{v}(p)$ are polynomially bounded on $\PP$. 

Since each $g^{(w)}_v(p)$ is continuous, $f_{v}(p)$ is continuous. By Lemma \ref{lem:f_poly_bounded}, each $f_{v}(p)$ is polynomially bounded on $\PP$. Finally, note that $1 - f_{v}(p) = \sum_{w \neq v} f_{w}(p)$, so $1-f_{v}(p)$ is polynomially bounded by Lemma \ref{lemma:sum_poly_bounded}.  
\end{proof}

\bibliographystyle{plainnat}
\bibliography{bernoulli}

\appendix

\section{Missing Proofs}

\begin{proof}[Proof of Lemma \ref{lemma:necessary3}]
Consider an implementation of $f$ by a Bernoulli factory and fix a point $a \in [0,1]^n$ in the domain of $f$. We want to show that for every $\epsilon > 0$, there is $\delta$ such that if $\norm{p-a} < \delta$ then $\abs{f(p) - f(a)} < \epsilon$. 

To show that, let $T$ be a random variable showing the number of coins flipped before the output if the decision tree is executed using an $a$-coin (this is equal to the depth of the output node reached in the tree). Now, fix $t$ such that $\P_a[T > t] < \epsilon/4$. Represent a possible realization of the first $t$ coin flips of each coin by a tuple $x=(x_1,\hdots, x_t)$ for $x_i \in \{0,1\}^n$ we define function $F(x) \in \{0,1,\emptyset\}$ indicating whether the decision tree outputs $0$, $1$ or doesn't yet terminate after seeing inputs $x_1, \hdots, x_t$. Also, let $X=(X_1, \hdots, X_t) \in \{0,1\}^{nt}$ be the random output of the coins.  With that, we can rewrite $\P_a[T > t] < \epsilon/4$ as:
\begin{equation}\label{eq:prob_output}
\sum_{x \in \{0,1\}^{nt}; F(x) = \emptyset} \P_a[X=x] \leq \frac{\epsilon}{4}
\end{equation}

Now, choose $\delta$ small enough such that the total variation distance between the sequences $X = (X_1, \hdots, X_{t})$ generated under $p$ and $a$ is at most $\epsilon/3$ for any $\norm{p-a} < \delta$. More formally:
\begin{equation}\label{eq:tv_distance}
\sum_{x \in \{0,1\}^{nt}} \abs{\P_{a}[X = x ] - \P_{p}[X = x ]} < \frac{\epsilon}{4}, \forall p \in  \B_\infty(a,\delta)
\end{equation}
Now, we can bound $f(a)$ and $f(p)$ for $\norm{p-a} < \delta$ as follows:
$$\left|f(a) - \sum_{x \in \{0,1\}^{nt}; F(x) \in \{0,1\}} F(x) \P_a[X=x] \right| \leq \sum_{x \in \{0,1\}^{nt}; F(x) = \emptyset} \P_a[X=x] < \frac{\epsilon}{4}$$
and similarly:
$$\left|f(p) - \sum_{x \in \{0,1\}^{nt}; F(x) \in \{0,1\}} F(x) \P_{p}[X=x] \right| \leq \sum_{x \in \{0,1\}^{nt}; F(x) = \emptyset} \P_{p}[X=x] < \frac{\epsilon}{2}$$
where the last bound follows from combining equations \eqref{eq:prob_output} and \eqref{eq:tv_distance}. Now, taking it all together, we have:
$$\abs{f(a) - f(p)} \leq \left|\sum_{x \in \{0,1\}^{nt}; F(x) \in \{0,1\}} F(x) (\P_a[X=x] - \P_{p}[X=x]) \right| + \frac{3\epsilon}{4} < \frac{\epsilon}{4} + \frac{3\epsilon}{4} = \epsilon $$
\end{proof}

\end{document}